\providecommand{\U}[1]{\protect \rule{.1in}{.1in}}
\newtheorem{theorem}{Theorem}[section]
\newtheorem{lemma}[theorem]{Lemma}
\newtheorem{proposition}[theorem]{Proposition}
\theoremstyle{remark}
\newtheorem{remark}[theorem]{Remark}
\numberwithin{equation}{section}
\begin{document}
\title[Darboux-Treibich-Verdier potential and spectrum]{A necessary and sufficient condition for the Darboux-Treibich-Verdier potential with its spectrum contained in $\mathbb{R}$}
\author{Zhijie Chen}
\address{Department of Mathematical Sciences, Yau Mathematical Sciences Center,
Tsinghua University, Beijing, 100084, China }
\email{zjchen2016@tsinghua.edu.cn}
\author{Erjuan Fu}
\address{Yau Mathematical Sciences Center,
Tsinghua University, Beijing, 100084, China}
\email{fuerjuan@gmail.com}
\author{Chang-Shou Lin}
\address{Center for Advanced Study in
Theoretical Sciences (CASTS), National Taiwan University, Taipei 10617, Taiwan }
\email{cslin@math.ntu.edu.tw}

\begin{abstract}
In this paper, we study the spectrum of the complex Hill operator $L=\frac{d^2}{dx^2}+q(x;\tau)$ in $L^2(\mathbb{R},\mathbb{C})$ with the Darboux-Treibich-Verdier potential
\[q(x;\tau):=-\sum_{k=0}^{3}n_{k}(n_{k}+1)\wp \left(
x+z_0+\tfrac{\omega_{k}}{2};\tau \right),\]
where $n_k\in\mathbb{Z}_{\geq 0}$ with $\max n_k\geq 1$ and $z_0\in\mathbb{C}$ is chosen such that $q(x;\tau)$ has no singularities on $\mathbb{R}$. For any fixed $\tau\in i\mathbb{R}_{>0}$, we give a necessary and sufficient condition on $(n_0,n_1,n_2,n_3)$ to guarantee that the spectrum $\sigma(L)$ is
 \[\sigma(L)=(-\infty, E_{2g}]\cup[E_{2g-1}, E_{2g-2}]\cup \cdots \cup[E_{1}, E_{0}],\quad E_j\in \mathbb{R},\]
 and hence generalizes Ince's remarkable result in 1940 for the Lam\'{e} potential to the Darboux-Treibich-Verdier potential. We also determine the number of (anti)periodic eigenvalues in each bounded interval $(E_{2j-1}$, $E_{2j-2})$, which generalizes the recent result in \cite{HHV} where the Lam\'{e} case $n_1=n_2=n_3=0$ was studied.
\end{abstract}
\maketitle

\section{Introduction}

Let $\tau \in \mathbb{H}=\{  \tau|\operatorname{Im}\tau>0\}$ and $E_{\tau}:=\mathbb{C}/(\mathbb{Z}+\mathbb{Z}\tau)$ be a flat torus.
Recall that $\wp(z)=\wp(z;\tau)$ is the
Weierstrass elliptic function with basic periods $\omega_{1}=1$ and $\omega_{2}%
=\tau$. Denote also $\omega_{0}=0$ and $\omega_{3}=1+\tau$.
In this paper, we study the \emph{Darboux-Treibich-Verdier potential} (DTV potential for short) \cite{Darboux,TV,Veselov}: \begin{equation}\label{Tre}
q^{\mathbf{n}}(z;\tau):=-\sum_{k=0}^{3}n_{k}(n_{k}+1)\wp \left(
z+\tfrac{\omega_{k}}{2};\tau \right),
\end{equation}
where $\mathbf{n}=(n_{0},n_{1},n_{2},n_{3})$ and $n_k\in\mathbb{Z}_{\geq 0}$ for all $k$ with $\max_k n_k\geq 1$. Without loss of generality, we always assume
\[n_0=\max_k n_k\geq 1.\]
If $n_{1}=n_{2}=n_{3}=0$, then $q^{\mathbf{n}}(z;\tau)$ becomes the classical Lam\'{e} potential \cite{Lame} \[q_n(z;\tau):=-n(n+1)\wp(z;\tau).\]

The DTV potential $q^{\mathbf{n}}(z;\tau)$ is famous
as an algebro-geometric finite-gap potential associated with the stationary
KdV hierarchy. We refer the readers to \cite{CLW,CKL1,CKL2,GW1,Tak1,Tak2,Tak3,Tak4,Tak5,TV,Veselov}
and references therein for historical reviews and subsequent developments. In the literature, a potential $q(z)$ is called an \emph{algebro-geometric finite-gap
potential }if there is an odd-order differential operator
\begin{equation}\label{odd-op}
P_{2g+1}=\left( \frac{d}{dz} \right)^{2g+1}%
+\sum_{j=0}^{2g-1}b_{j}(z)\left( \frac{d}{dz} \right)^{2g-1-j}
\end{equation} such that $[P_{2g+1}, d^{2}/dz^{2}
+q(z)]=0$, that is, $q(z)$ is a solution of stationary KdV hierarchy equations (cf.
\cite{GH-Book,GW}).

For the DTV potential $q^{\mathbf{n}}(z;\tau)$, we let $P_{2g+1}$ be the unique operator of the form (\ref{odd-op}) satisfying $[P_{2g+1}, d^{2}/dz^{2}
+q^{\mathbf{n}}(z;\tau)]=0$ such that its order $2g+1$ is \emph{smallest}. Then
a celebrated theorem of Burchnall and Chaundy \cite{Burchnall-Chaundy} implies the existence of the so-called {\it spectral polynomial} $Q^{\mathbf{n}}(E;\tau)$ of degree $2g+1$ in $E$ associated to $q^{\mathbf{n}}(z;\tau)$ such that
\begin{equation}\label{eqeq}P_{2g+1}^2=Q^{\mathbf{n}}(\tfrac{d^{2}}{dz^{2}}
+q^{\mathbf{n}}(z;\tau);\tau).\end{equation}
The number
$g$, i.e. the arithmetic genus of the associate hyperelliptic curve $\{(E,W)|W^{2}=Q^{\mathbf{n}}(E;\tau)\}$, was
computed in \cite{GW1, Tak5}: Let $m_{k}$ be the rearrangement of $n_{k}$ such
that $m_{0}\geq m_{1}\geq m_{2}\geq m_{3}$, then
\begin{equation}
g=%
\begin{cases}
m_{0} & \text{if $\sum m_{k}$ is even and $m_{0}+m_{3}\geq m_{1}+m_{2}$};\\
\frac{m_{0}+m_{1}+m_{2}-m_{3}}{2} & \text{if $\sum m_{k}$ is even and
$m_{0}+m_{3}<m_{1}+m_{2}$};\\
m_{0} & \text{if $\sum m_{k}$ is odd and $m_{0}>m_{1}+m_{2}+m_{3}$};\\
\frac{m_{0}+m_{1}+m_{2}+m_{3}+1}{2} & \text{if $\sum m_{k}$ is odd and
$m_{0}\leq m_{1}+m_{2}+m_{3}$}.
\end{cases}
\label{genus}%
\end{equation}
Furthermore, it is known (cf. \cite{GW1,Tak1,Tak5}) that the roots of $Q^{\mathbf{n}}(\cdot
;\tau)=0$ are \textit{distinct} for generic $\tau \in \mathbb{H}$ and \begin{equation}\label{eq-22}Q^{\mathbf{n}}(E;\tau)\in \mathbb{R}[E]\quad\text{ for }\quad\tau\in i\mathbb{R}_{>0}.\end{equation}

The spectral polynomial plays an important role in the spectral theory of the associated Hill operator. Since the DTV potential is doubly periodic, there are two such operators related to the two periods respectively. In this paper, we study the operator along the $\omega_1=1$ direction, i.e. we study the spectrum $\sigma(L)$ of the following Hill operator
\begin{equation}\label{hill-operator}
L=\frac{d^2}{dx^2}+q(x;\tau),\quad  x\in\mathbb{R},\;\;\text{with} \;\; q(x;\tau)=q^{\mathbf{n}}(x+z_0;\tau)\end{equation}
in $L^2(\mathbb{R},\mathbb{C})$,
where $z_0\in\mathbb{C}$ is chosen such that $q(x;\tau)$ has no singularities on $\mathbb{R}$. The spectral theory of the complex Hill operator has attracted significant attention and has been studied widely in the literature; see e.g. \cite{BG-JAM,B-CPAM,GW,GW2,HHV,Rofe-Beketov} and references therein.

Suppose for some $\tau\in i\mathbb{R}_{>0}$ that all roots
of the spectral polynomial $Q^{\mathbf{n}}(\cdot
;\tau)$ are real and distinct, denoted by $E_{2g}<E_{2g-1}<\cdots<E_{1}<E_{0}$, then we proved in \cite[Lemma 3.6]{CL-AJM} (we will recall it in Lemma \ref{lemma5}) that the spectrum $\sigma(L)$ is
\begin{equation}
\label{spectrum0-1}\sigma(L)=(-\infty, E_{2g}]\cup[E_{2g-1}, E_{2g-2}]\cup \cdots \cup[E_{1}, E_{0}].
\end{equation}
This result was first discovered by Ince in the seminal work \cite{Ince}, where he proved that (\ref{spectrum0-1}) holds for the Lam\'{e} case $L=\frac{d^2}{dx^2}-n(n+1)\wp(x+\frac{\omega_k}{2};\tau)$ with $k\in\{2,3\}$. His proof essentially relies on the fact that $\wp(x+\frac{\omega_k}{2};\tau)$ with $k\in\{2,3\}$ is \emph{real-valued} and smooth on $\mathbb{R}$, and hence does not work for the general DTV case.

\subsection{Real and distinct roots}
In this paper, we study two problems related to the spectrum $\sigma(L)$ of the operator $L$ in (\ref{hill-operator}). The first one is
whether the spectrum $\sigma(L)$ for the DTV potential is of the form (\ref{spectrum0-1}) or not, or equivalently,

\medskip

\noindent \textbf{(Q$_1$)}: {\it Whether are all roots of the spectral polynomial $Q^{\mathbf{n}}(\cdot;\tau)$ with $\tau\in i\mathbb{R}_{>0}$ real and distinct?}

\medskip

For the Lam\'{e} case, the answer for \textbf{(Q$_1$)} is Yes as mentioned before. However, it is not necessarily true for all the DTV potentials; see e.g. \cite[Remark 4.2]{CL-AJM} for a counterexample.
Thus further assumptions on $n_k$'s are needed. See \cite{CKLT,GW1,Tak1} for some sufficient (but not necessary) conditions on $n_k$'s. Here we introduce two relations:
\begin{equation}
\label{c1}\frac{n_{1}+n_{2}-n_{0}-n_{3}}{2}\geq1,\quad n_{1}\geq1,\quad
n_{2}\geq1,
\end{equation}
\begin{equation}
\label{c2}\frac{n_{0}+n_{3}-n_{1}-n_{2}}{2}\geq 1,\quad n_{0}\geq1,\quad
n_{3}\geq1.
\end{equation}
Recently, we obtained an almost complete answer to \textbf{(Q$_1$)} in \cite{CL-AJM}.

\medskip

\noindent {\bf Theorem A.} \cite{CL-AJM}
\emph{All the roots of $Q^{\mathbf{n}}(\cdot;\tau)$ are real
and distinct for {\bf every} $\tau \in i\mathbb{R}_{>0}$ if and only if $\mathbf{n}$ satisfies neither (\ref{c1}) nor (\ref{c2}).}
\medskip

To emphasize the importance of Theorem A, we mention one application to the following mean field equation
\begin{equation}\label{mfe}
\Delta u+e^u=8\pi \sum_{k=0}^3 n_k\delta_{\frac{\omega_k}{2}},\quad \text{on }\;E_{\tau},
\end{equation}
where $\delta_{\frac{\omega_k}{2}}$ is the Dirac measure at $\frac{\omega_k}{2}$.
Theorem A is the crucial step to prove the following non-existence result.

\medskip

\noindent {\bf Theorem B.} \cite{CL-AJM}
\emph{Equation (\ref{mfe}) has no even solutions for all $\tau \in i\mathbb{R}_{>0}$ if and only if $\mathbf{n}$ satisfies neither (\ref{c1}) nor (\ref{c2}).}
\medskip

In this paper, we succeed to delete the condition "every" in Theorem A via a new observation, and hence give the complete answer to \textbf{(Q$_1$)}.
Our first result is

\begin{theorem}\label{sharp-realroot} The following statements are equivalent.
\begin{itemize}
\item[(1)] $Q^{\mathbf{n}}(\cdot;\tau)$ has at least two roots in $\mathbb{C}\setminus\mathbb{R}$ for some $\tau \in i\mathbb{R}_{>0}$;
\item[(2)] $Q^{\mathbf{n}}(\cdot;\tau)$ has at least two roots in $\mathbb{C}\setminus\mathbb{R}$ for all $\tau \in i\mathbb{R}_{>0}$;
\item[(3)] $\mathbf{n}$ satisfies either (\ref{c1}) or (\ref{c2}).
\end{itemize}
In this case, the spectrum $\sigma(L)\not\subset\mathbb{R}$ is still symmetric with respect to $\mathbb{R}$ but not of the form (\ref{spectrum0-1}).
\end{theorem}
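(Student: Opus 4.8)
The plan is to reduce Theorem \ref{sharp-realroot} to a statement about the reality of the roots and then close a cycle of implications, isolating the genuinely new content in (3)$\Rightarrow$(2). Since $Q^{\mathbf n}(\cdot;\tau)\in\mathbb R[E]$ for $\tau\in i\mathbb R_{>0}$ by (\ref{eq-22}), its non-real roots occur in complex-conjugate pairs, so ``at least two roots in $\mathbb C\setminus\mathbb R$'' is the same as ``at least one non-real root,'' i.e. ``not all roots are real.'' With this reformulation, (2)$\Rightarrow$(1) is trivial, and (1)$\Rightarrow$(3) follows by contraposition from Theorem A: if $\mathbf n$ satisfies neither (\ref{c1}) nor (\ref{c2}), then all roots of $Q^{\mathbf n}(\cdot;\tau)$ are real (and distinct) for every $\tau\in i\mathbb R_{>0}$, contradicting the presence of a non-real root at some $\tau$. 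It therefore remains to prove the implication (3)$\Rightarrow$(2).

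For (3)$\Rightarrow$(2) I would argue by a deformation in $\tau$ together with an open--closed argument on the connected interval $i\mathbb R_{>0}$. The $2g+1$ roots $E_j(\tau)$ are branches of an algebraic function of $\tau$, distinct for generic $\tau$, so the non-real count $N_{\mathbb C}(\tau):=\#\{j:\operatorname{Im}E_j(\tau)\neq 0\}$ is even and locally constant off the discrete collision set $\{\tau:\operatorname{disc}Q^{\mathbf n}(\cdot;\tau)=0\}$. Continuity of roots already shows that $\{\tau\in i\mathbb R_{>0}:N_{\mathbb C}(\tau)\geq 2\}$ is open: near a $\tau_0$ with a root $E_0$, $\operatorname{Im}E_0\neq 0$, there is a nearby root with nonzero imaginary part. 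To start the argument I would first establish (3)$\Rightarrow$(1), i.e. that condition (\ref{c1}) or (\ref{c2}) forces genuinely non-real roots for at least one $\tau$; here one must check that the failure of ``all real and distinct'' guaranteed by Theorem A is caused by a conjugate pair and not by a merely repeated real root. This can be pinned down by analysing the degenerate limit $\tau\to i\infty$, where the $q$-expansion of $\wp$ turns the DTV potential into an explicit trigonometric (P\"{o}schl--Teller type) potential whose spectral polynomial factors and whose non-real roots can be exhibited and counted directly under each of (\ref{c1}), (\ref{c2}).

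The decisive step---the ``new observation'' that removes the word ``every'' from Theorem A---is the closedness of $\{N_{\mathbb C}\geq 2\}$, and this is where I expect the main obstacle. A priori, as $\tau$ moves along $i\mathbb R_{>0}$ a conjugate pair $E=a\pm bi$ could descend to a double real root at some $\tau_\ast$ and re-emerge as two distinct real roots, dropping $N_{\mathbb C}$ by $2$; such a $\tau_\ast$ would lie in $\{N_{\mathbb C}=0\}$ yet be a limit of $\{N_{\mathbb C}\geq 2\}$, breaking closedness. The heart of the matter is therefore to exclude this ``touch-down'': to show that every multiple root of $Q^{\mathbf n}(\cdot;\tau)$ along $i\mathbb R_{>0}$ is a real root arising from the coalescence of two real branches (a closed spectral gap) that split back into real roots, so that $N_{\mathbb C}(\tau)$ is globally constant and hence equal to the positive value obtained in the limit analysis. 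I would attempt this either through the spectral interpretation of real double roots as closed gaps of a band structure (which cannot be vacated by a complex pair), or through the $\tau\mapsto-1/\tau$ modular symmetry, which preserves $i\mathbb R_{>0}$ as well as both (\ref{c1}) and (\ref{c2}) while exchanging the two spectral directions, thereby forcing the count to be symmetric and rigid.

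Finally, for the concluding assertion, once (2) holds the spectrum contains non-real points and so cannot be of the form (\ref{spectrum0-1}), since by Lemma \ref{lemma5} that form is attained exactly when all roots of $Q^{\mathbf n}(\cdot;\tau)$ are real. The symmetry of $\sigma(L)$ about $\mathbb R$ follows from the conjugation symmetry of the problem for $\tau\in i\mathbb R_{>0}$: using $\overline{\wp(z;\tau)}=\wp(\bar z;\tau)$ one checks that the Floquet discriminant $\Delta(E)$ has real Taylor coefficients (consistently with (\ref{eq-22})), so that $\sigma(L)=\{E:\Delta(E)\in[-2,2]\}$ is invariant under $E\mapsto\bar E$.
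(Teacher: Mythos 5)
Your reduction is sound: since $Q^{\mathbf n}(\cdot;\tau)\in\mathbb R[E]$ for $\tau\in i\mathbb R_{>0}$, the three statements reduce to whether all roots are real; (2)$\Rightarrow$(1) is trivial, (1)$\Rightarrow$(3) is the contrapositive of Theorem A, and the content is (3)$\Rightarrow$(2) via an open--closed argument on $i\mathbb R_{>0}$. You have also correctly located the crux: one must rule out a conjugate pair ``touching down'' onto the real axis as $\tau$ varies. But at exactly this point the proposal stops short of a proof. The paper's resolution is Proposition \ref{real-imply-distinct}: for $\tau\in i\mathbb R_{>0}$, \emph{if all roots of $Q^{\mathbf n}(\cdot;\tau)$ are real then they are automatically distinct}. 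This single statement does all the work: it makes $\Gamma=\{\tau\in i\mathbb R_{>0}: Q^{\mathbf n}(\cdot;\tau)\ \text{has a non-real root}\}$ closed (a touch-down would produce an all-real configuration with a double root), and it also yields $\Gamma\neq\emptyset$ directly from the ``only if'' half of Theorem A (the failure of ``real and distinct'' at some $\tau$ cannot be caused by a repeated real root, hence must come from a non-real root), with no need for a separate base-case analysis. Your first suggested route for the crux --- that a real double root is a ``closed gap'' which ``cannot be vacated by a complex pair'' --- is precisely the assertion to be proved, not an argument for it; for a complex, non-self-adjoint Hill operator there is a priori no band structure forcing double points to stay real. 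Your second route (the $\tau\mapsto-1/\tau$ symmetry) points at the right tool, Lemma \ref{lemma3}, but ``forcing the count to be symmetric and rigid'' is not an argument: the actual proof of Proposition \ref{real-imply-distinct} is an induction on the ordered roots $E_0\ge E_1\ge\cdots\ge E_{2g}$, alternating between the arc/boundary structure of $\tilde{\mathcal S}^{(n_0,n_1,n_2,n_3)}(\tau)$ for even steps and of $\tilde{\mathcal S}^{(n_0,n_2,n_1,n_3)}(-1/\tau)$ for odd steps, using Theorem 2.A to identify interior points of the conditional stability set with double roots and boundary points with simple roots. None of this is reconstructed in the proposal, so there is a genuine gap.

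A secondary inaccuracy: you propose to establish (3)$\Rightarrow$(1) by exhibiting non-real roots ``directly'' in the trigonometric limit $\tau\to i\infty$. The limiting spectral polynomials (cf. (\ref{eq-4-211})) have only \emph{real} roots, most of them double; non-real roots can only arise from a perturbative splitting of these double roots at finite $\operatorname{Im}\tau$, which is a delicate analysis (essentially how Theorem A is proved in \cite{CL-AJM}) and cannot be read off from the limit itself. In the paper this step is bypassed entirely, as explained above. The final assertions (that $\sigma(L)\not\subset\mathbb R$ because the roots of $Q^{\mathbf n}$ lie in $\sigma(L)$, and the symmetry of $\sigma(L)$ about $\mathbb R$) are handled correctly and as in the paper.
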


We end this subsection by proposing some open problems. Let $D(\tau)$ denote the discriminant of $Q^{\mathbf{n}}(E;\tau)$ as a polynomial in $E$. It is easy to see that $D(\tau)$ is a modular form with respect to $\Gamma(2)=\{A\in SL(2,\mathbb{Z}) \,|\, A\equiv I_2\,\operatorname{mod}\, 2\}$ and vanishes at $\infty$. By the aforementioned result proved by Takemura \cite{Tak5} that $Q^{\mathbf{n}}(\cdot;\tau)$ has distinct roots expect for a discrete set of $\tau$'s, we have $D(\tau)\not\equiv 0$. We propose
\medskip

\noindent{\bf Conjecture.} \emph{$D(\tau)$ has at most simple zeros in $\mathbb{H}$, namely if $D(\tau)=0$ then $D'(\tau)\neq 0$}.
\medskip

It is also interesting to ask whether $D(\tau)$ vanishes at other cusps and to compute its vanishing order.
For the case $\mathbf{n}$ satisfying either (\ref{c1}) or (\ref{c2}), since $Q^{\mathbf{n}}(E;\tau)$ with $\tau=ib$, $b>0$, always has complex roots, another open problem is to determine the number of pairs of complex roots (for large $b$).

\subsection{Location of (anti)periodic eigenvalues}
The second problem is to study (anti)periodic eigenvalues of $L$.
Recall that $E\in\mathbb{C}$ is called a periodic (resp. antiperiodic) eigenvalue of $L$ if $Ly=Ey$ has a nonzero solution $y$ satisfying $y(x+1)=y(x)$ (resp. $y(x+1)=-y(x)$).
It is well known (cf. \cite{GW}) that the operator $L$ in (\ref{hill-operator}) has \emph{countably many} periodic and antiperiodic eigenvalues, which contain all roots of the spectral polynomial $Q^{\mathbf{n}}(\cdot;\tau)$ as a proper subset. Denote
\begin{equation}
\sigma_{p}(L):=\{E\,|\, \text{$E$ is a (anti)periodic eigenvalue of $L$, $Q^{\mathbf{n}}(E;\tau)\neq 0$}\}.
\end{equation}
Clearly $\sigma_p(L)\subset \sigma(L)$.
Concerning the positions of those $E\in\sigma_{p}(L)$, Haese-Hill et al. \cite{HHV} proved that

\medskip

\noindent {\bf Theorem C.} \cite{HHV}
\emph{
For the Lam\'{e} case $n_1=n_2=n_3=0$ with $\tau\in i\mathbb{R}_{>0}$, there holds
\begin{equation}\label{eq-open-gap}
\sigma_{p}(L)\cap (E_{2j-1}, E_{2j-2})=\emptyset\;\; \forall\, 1\leq j\leq n, \; \text{i.e.}\; \sigma_{p}(L)\subset (-\infty, E_{2n}).
\end{equation}}

Let $\Delta(E;\tau)$ be the Hill's discriminant of the operator $L$ in (\ref{hill-operator}), then $E$ is a periodic (resp. antiperiodic) eigenvalue if and only if $\Delta(E;\tau)=2$ (resp. $\Delta(E;\tau)=-2$); see Section \ref{Floquet-theory} for a brief overview of this entire function $\Delta(E;\tau)$. Theorem C indicates that for the Lam\'{e} case,
\[\Delta(E_{2j-1};\tau)\Delta(E_{2j-2};\tau)=-4\;\; \forall\, 1\leq j\leq n.\]
This sign information is also important because it is invariant if we consider the deformation of $\tau$.

We want to generalize Theorem C to the DTV potentials. Assume that $\mathbf{n}$ violates both (\ref{c1}) and (\ref{c2}), then Theorem A says that the spectrum $\sigma(L)$ is given by (\ref{spectrum0-1}), and it is easy to see that one of the following hold (note $n_0=\max_k n_k\geq 1$)
\begin{enumerate}
\item[(a)] \emph{either $n_0\geq n_1+n_2+1$ with $n_3=0$ or $n_0+n_3=n_1+n_2$};
\item[(b)] $n_0+n_3=n_1+n_2-1$;
\item[(c)] $n_0+n_3=n_1+n_2+1$ \emph{with} $n_3\geq 1$.
\end{enumerate}
Recalling (\ref{genus}), we obtain
\begin{equation}
g=%
\begin{cases}
n_{0} & \text{in Case (a)};\\
n_0+n_3+1 &\text{in Case (b)};\\
n_0+n_3 & \text{in Case (c)}.
\end{cases}
\label{genus1}%
\end{equation}
Define a new integer
\begin{equation}
m:=%
\begin{cases}
n_{0}-n_1 & \text{in Case (a)};\\
n_2+n_3+1 &\text{in Case (b)};\\
n_2+n_3+1 & \text{in Case (c) with $n_0>n_2$},\\
n_2+n_3   &\text{in Case (c) with $n_0=n_2$}.
\end{cases}
\label{genus2}%
\end{equation}
Clearly $g\geq m$.
Then our next result shows that (\ref{eq-open-gap}) does not necessarily hold for all the DTV potentials.

\begin{theorem}\label{thm-TV-open-gap}
Let $\mathbf{n}$ satisfy neither (\ref{c1}) nor (\ref{c2}), with $(g,m)$ given in (\ref{genus1})-(\ref{genus2}) and $\tau\in i\mathbb{R}_{>0}$. Then for the operator $L$ in (\ref{hill-operator}), there holds
\begin{align*}
&\sigma_{p}(L)\cap (E_{2j-1}, E_{2j-2})=\emptyset\quad \forall 1\leq j\leq m,\\
&\sigma_{p}(L)\cap (E_{2j-1}, E_{2j-2})=\text{one point}\quad \forall m+1\leq j\leq g.
\end{align*}
In particular, $\Delta(E_{2m-1};\tau)=\Delta(E_{2m};\tau)=\cdots=\Delta(E_{2g};\tau)=(-1)^m 2$.
\end{theorem}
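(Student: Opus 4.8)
The plan is to translate the statement into Floquet theory and reduce it to two pieces of discrete data attached to each bounded band. Writing $\Delta(E;\tau)$ for the Hill discriminant, I would first record the symmetry $\overline{\Delta(E;\tau)}=\Delta(\bar E;\tau)$ valid for $\tau\in i\mathbb{R}_{>0}$, so that $\Delta(\cdot;\tau)$ is real on $\mathbb{R}$; combined with the band description (\ref{spectrum0-1}) from Theorem A (recalled in Lemma \ref{lemma5}), this gives $\Delta(E;\tau)\in[-2,2]$ on a band and $|\Delta|>2$ in the gaps, with each band edge $E_k$ a simple zero of $\Delta^2-4$ at which $\Delta$ leaves $[-2,2]$ transversally. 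An element of $\sigma_p(L)\cap(E_{2j-1},E_{2j-2})$ is then exactly an interior point of the band where $\Delta=\pm2$; since $\Delta$ cannot cross $\pm2$ there without leaving the band, such a point is a tangential (double) (anti)periodic eigenvalue, i.e. a \emph{closed gap} / coexistence point that is not one of the branch points $E_k$. Thus the theorem is equivalent to two assertions: the edge values $\Delta(E_k;\tau)\in\{+2,-2\}$ follow the stated pattern, and each bounded band $(E_{2j-1},E_{2j-2})$ contains exactly one closed gap for $m<j\le g$ and none for $j\le m$.

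To compute both, I would use the explicit finite-gap (Bloch) solutions of $Ly=Ey$. For the DTV potential these are given by a Hermite--Halphen type ansatz, a product of translated Weierstrass $\sigma$-functions times an exponential, whose quasi-periods are governed by the local exponents at the four half-periods $\omega_k/2$, and hence by $\mathbf{n}$. A value $E$ is an (anti)periodic eigenvalue precisely when such a solution has monodromy $y(x+1)/y(x)\in\{+1,-1\}$ along $\omega_1=1$; the branch points $E_k$ (roots of $Q^{\mathbf{n}}$) are those $E$ admitting a single such solution, while the closed gaps are those admitting two independent ones (coexistence). Using the transformation law of $\sigma$ under $z\mapsto z+1$, the monodromy sign of each band-edge solution can be written explicitly in terms of the exponent data, which reduces the sign pattern to a counting problem over $\mathbf{n}$.

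Carrying out this bookkeeping in each of the Cases (a), (b), (c) of (\ref{genus1})--(\ref{genus2}) should yield the constant sign $\Delta(E_{2m-1};\tau)=\cdots=\Delta(E_{2g};\tau)=(-1)^m2$ and, in parallel, locate the coexistence points, showing that exactly $g-m$ of them lie in the bounded bands, one inside each band with $m<j\le g$ and none with $j\le m$; the integer $m$ of (\ref{genus2}) is precisely the index at which the edge parities stop alternating. The two-part count then follows, and the displayed sign identity is read off directly. As an independent check, both data are invariant as $\tau=ib$ runs over $(0,\infty)$: by Theorem A the $2g+1$ branch points stay real and distinct, so the edge signs are frozen, while the closed gaps, being genuine coexistence points of the integrable DTV potential, can neither merge into a branch point (that would force a multiple root of $Q^{\mathbf{n}}$) nor be created or annihilated inside a band; one may therefore also evaluate everything in a degenerate limit where $q(x;\tau)$ becomes an explicit complex trigonometric potential.

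The main obstacle is the monodromy/exponent bookkeeping that produces the exact value of $m$: tracking how the quasi-periodicity of the $\sigma$-product depends on the four integers $n_k$, and proving that the resulting parities alternate down to $E_{2m-1}$ and then stabilize, is delicate and is exactly what distinguishes Cases (a), (b), (c). A secondary difficulty is the existence and uniqueness of the interior coexistence point in each of the bottom $g-m$ bands, i.e. showing that $\Delta$ attains the opposite value $\mp2$ once and only once there. For this one cannot invoke monotonicity of $\Delta$, which genuinely fails in the non-self-adjoint DTV setting; instead I would use the algebro-geometric rigidity of the finite-gap structure, via the factorization $\Delta^2-4=-c\,Q^{\mathbf{n}}(E)\,S(E)^2$ together with the $\tau$-invariance above, to pin the count in each band to exactly the claimed value.
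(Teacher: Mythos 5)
Your Floquet-theoretic framing is consistent with the paper's (real $\Delta$ on $\mathbb{R}$ for $\tau\in i\mathbb{R}_{>0}$, the band picture from Lemma \ref{lemma5}, interior points of $\sigma_p(L)$ as tangential zeros of $\Delta^2-4$, and rigidity under deformation of $\tau=ib$), but the proposal defers exactly the two steps that constitute the proof, and the tools you name for them would not close the gaps. The first gap is the sign pattern and the identification of $m$. You propose to read the monodromy signs off a Hermite--Halphen ansatz ``in terms of the exponent data'' and assert the bookkeeping ``should yield'' $(-1)^m2$; but for a general DTV potential at finite $\tau$ the branch points $E_k(\tau)$ are not explicit, and ordering them on $\mathbb{R}$ so as to attach the correct sign to the $k$-th edge is precisely the problem. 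The paper's mechanism is the degenerate limit you relegate to an ``independent check'': one proves $\Delta(E;\tau)\to 2\cos\sqrt{C_T^{\mathbf{n}}-E}$ (Lemma \ref{lem-tri-limit}) \emph{and}, crucially, that the spectral polynomial converges to the explicit trigonometric one (Lemma \ref{lem-spe-poly}), so that each $E_k(\tau)$ has a computable limit $C_T^{\mathbf{n}}-j^2\pi^2$ or $C_T^{\mathbf{n}}-(2j-m)^2\pi^2$; the signs are then read off from the cosine and frozen by continuity since $\Delta(E_k(\tau);\tau)\in\{\pm2\}$ is locally constant. Moreover this limit only matches degrees when $g=n_0$ (Case (a)); Cases (b) and (c) require Takemura's isomonodromy $Q^{\mathbf{n}}=Q^{\tilde{\mathbf{n}}}$ (Remark \ref{rmk1}) to replace $\mathbf{n}$ by a tuple with $l_0=g$ before the limit can be taken. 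Your proposal contains no substitute for this reduction, and without it the limiting spectral polynomial does not even have the right degree.

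The second gap is the exact count of interior (anti)periodic eigenvalues. The factorization $\Delta^2-4=C^{-1}Q^{\mathbf{n}}(E)D(E)^2$ of (\ref{hyper}) does not ``pin the count in each band'': $D(E)$ is an entire function of infinite product type, and nothing algebro-geometric bounds how many of its zeros sit in a given bounded band for a given $\tau$. Likewise, your assertion that coexistence points can neither be created nor annihilated inside a band is the statement to be proved, not a usable premise. What actually rules out creation/annihilation is the lemma (equation (\ref{eq-4-5}) in the paper) that any interior local extremum of $\Delta(\cdot;\tau)$ on a band must take the value $\pm2$ --- otherwise $2k\geq4$ spectral semi-arcs would meet there, contradicting the band structure --- combined with a connectedness argument in $b$: if the count changed at some $\tilde b$, one produces for nearby $b$ an interior extremum at level $\pm2$, i.e.\ a forbidden extra point of $\sigma_p(L)$, or forces two adjacent $E_k$'s to collide, contradicting Theorem A. The count is then anchored at $b=\infty$ by (\ref{eq-4-3}) and the limits of the $E_k(\tau)$. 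So the architecture you sketch is salvageable, but only if the trigonometric limit is promoted from a ``check'' to the main engine, the convergence of the spectral polynomial and the isomonodromy reduction for Cases (b)--(c) are supplied, and the invariance of the count is derived from the local-extremum lemma rather than assumed.
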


For the case $\mathbf{n}$ satisfying either (\ref{c1}) or (\ref{c2}), the spectrum $\sigma(L)$ is not of the form (\ref{spectrum0-1}), but it is still very interesting to study the location of (anti)periodic eigenvalues. We expect that the results should be much more complicated than Theorem \ref{thm-TV-open-gap}.

This paper is organized as follows. In Section 2, we briefly review the spectral theory of Hill equation from \cite{GW} and apply it to the DTV potentials. In Section \ref{general-lame}, we develop further our ideas in \cite{CL-AJM} to prove Theorem \ref{sharp-realroot}, where we prefer to provide all the necessary details to make the paper self-contained. Theorem \ref{thm-TV-open-gap} will be proved in Section \ref{sec-periodic}, where we will apply some results from \cite{Tak5}.

\section{Spectral theory \cite{GW}}

\label{Floquet-theory}

In this section, we briefly review the spectral theory of Hill
equation with \emph{complex-valued} potentials from \cite{GW} and apply it to the DTV potential; see Theorem 2.A, which will be used frequently in the proofs of Theorems \ref{sharp-realroot}-\ref{thm-TV-open-gap} in Sections \ref{general-lame}-\ref{sec-periodic}.

Let $q(x)$ is a complex-valued continuous nonconstant periodic function of
period $\Omega$ on $\mathbb{R}$. Consider the following Hill equation%
\begin{equation}
y^{\prime \prime}(x)+q(x)y(x)=Ey(x),\text{ \  \ }x\in \mathbb{R}. \label{eq2-1}%
\end{equation}
This equation has received an enormous amount of consideration due to its
ubiquity in applications as well as its structural richness; see e.g.
\cite{GW,GW2} and references therein for historical reviews.

Let $y_{1}(x)$ and $y_{2}(x)$ be any two linearly independent solutions of
(\ref{eq2-1}). Then so do $y_{1}(x+\Omega)$ and $y_{2}(x+\Omega)$ and
hence there is a monodromy matrix $M(E)\in SL(2,\mathbb{C})$ such that
\[
(y_{1}(x+\Omega),y_{2}(x+\Omega))=(y_{1}(x), y_{2}(x))M(E).
\]
Define the \emph{Hill's discriminant} $\Delta (E)$ by
\begin{equation}
\label{trace}\Delta(E):=\text{tr}M(E),
\end{equation}
which is clearly an invariant of (\ref{eq2-1}), i.e. does not depend on the choice of linearly
independent solutions. This entire function $\Delta(E)$ encodes all information of the spectrum
$\sigma(L)$ of the operator $L=\frac{d^{2}%
}{dx^{2}}+q(x)$; see e.g. \cite{GW2} and references
therein. Indeed, we define
\begin{equation}
\mathcal{S}:=\Delta^{-1}([-2,2])=\{E\in \mathbb{C}\,|\, -2\leq \Delta(E)\leq2\}
\end{equation}
to be the \textit{conditional stability set} of the operator $L=\frac{d^{2}%
}{dx^{2}}+q(x)$.
Then Rofe-Beketov \cite{Rofe-Beketov} proved that $S$ coincides with the spectrum:
\begin{equation}\label{spe-S}\sigma(L)=\mathcal{S}=\{E\in \mathbb{C}\,|\, -2\leq \Delta(E)\leq2\}.\end{equation}
This important fact will play a key role in this paper.

Clearly $E$ is a (anti)periodic eigenvalue if and only if $\Delta(E)=\pm 2$. Define
\[
d(E):=\text{ord}_{E}(\Delta(\cdot)^{2}-4).
\]
Then it is well known (cf. \cite[Section 2.3]{Naimark}) that $d(E)$ equals to \textit{the algebraic multiplicity of (anti)periodic
eigenvalues}. Let $c(E,x,x_{0})$ and $s(E,x,x_{0})$ be the special fundamental
system of solutions of (\ref{eq2-1}) satisfying the initial values
\[
c(E,x_{0},x_{0})=s^{\prime}(E,x_{0},x_{0})=1,\ c^{\prime}(E,x_{0}%
,x_{0})=s(E,x_{0},x_{0})=0.
\]
Then we have
\[
\Delta(E)=c(E,x_{0}+\Omega,x_{0})+s^{\prime}(E,x_{0}+\Omega,x_{0}).
\]
Define
\[
p(E,x_{0}):=\text{ord}_{E}s(\cdot,x_{0}+\Omega,x_{0}),
\]%
\[
p_{i}(E):=\min \{p(E,x_{0}):x_{0}\in \mathbb{R}\}.
\]
It is known  that $p(E,x_{0})$ is the algebraic
multiplicity of a Dirichlet eigenvalue on the interval $[x_0, x_0+\Omega]$,
and $p_{i}(E)$ denotes the immovable part of $p(E,x_{0})$ (cf. \cite{GW}). It
was proved in \cite[Theorem 3.2]{GW} that $d(E)-2p_{i}(E)\geq0$. Define
\begin{equation}
D(E):=E^{p_{i}(0)}\prod \limits_{\lambda \in \mathbb{C}\backslash \{0\}} \left(
1-\tfrac{E}{\lambda}\right)  ^{p_{i}(\lambda)}. \label{eq3-2}%
\end{equation}

Now we consider the operator $L$ in (\ref{hill-operator}), i.e. $q(x)=q(x;\tau)=q^{\mathbf{n}}(x+z_0;\tau)$ is the DTV potential, which is smooth on $\mathbb{R}$ with period $\Omega=1$.
Applying the general result \cite[Theorem 4.1]{GW} to the DTV potential, we obtain

\medskip \noindent \textbf{Theorem 2.A.} \cite[Theorem 4.1]{GW} \textit{For the DTV potential $q(x)=q^{\mathbf{n}}(x+z_0;\tau)$, the following hold.}

\textit{(i) $d(E)-2p_{i}(E)>0$ on a finite set $\{E_{j}\}_{j=1}^{m}$ for some $m\in\mathbb{N}$ and $d(E)-2p_{i}(E)=0$
elsewhere, and the associated spectral polynomial $Q^{\mathbf{n}}(E;\tau)$ satisfies
\begin{equation}
\label{hyper}Q^{\mathbf{n}}(E;\tau)=\prod_{j=1}^{m} (E-E_{j})^{d(E_{j}%
)-2p_{i}(E_{j})}=C\frac{\Delta(E)^{2}-4}{D(E)^{2}}.
\end{equation}
Here $D(E)$ is seen in (\ref{eq3-2}) and $C$ is some nonzero constant. In particular, $2g+1=\deg Q^{\mathbf{n}}(E;\tau)=\sum_{j=1}^{m} (d(E_{j})-2p_{i}(E_{j}))$.}

\textit{(ii) the spectrum $\sigma(L)=\mathcal{S}$ consists of finitely many
bounded simple analytic arcs $\sigma_{k}$, $1\leq k\leq \tilde{g}$ for some $\tilde
{g}\leq g$ and one semi-infinite simple analytic arc $\sigma_{\infty}$ which tends to
$-\infty+\langle q\rangle$, with $\langle q\rangle=\int
_{x_{0}}^{x_{0}+1}q(x)dx$, i.e.
\[
\sigma(L)=\mathcal{S}=\sigma_{\infty}\cup \cup_{k=1}^{\tilde{g}}\sigma_{k}.
\]
Furthermore, the finite end points of such arcs must be those $E\in
\{E_{j}\}_{j=1}^{m}$ with $d(E)=2p_i(E)+\text{ord}_{E}Q^{\mathbf{n}}(\cdot;\tau)$ odd, and there are exactly $d(E)$'s semi-arcs of $\sigma(L)$ meeting at such $E$. } \medskip

\section{Proof of Theorem \ref{sharp-realroot}}

\label{general-lame}

The purpose of this section is to prove Theorem \ref{sharp-realroot}.
First we briefly explain why the spectrum $\sigma(L)$ does not depend on the choice of $z_0$. Consider the generalized
Lam\'{e} equation (GLE)
\begin{equation}
y^{\prime \prime}(z)=\bigg[  \sum_{k=0}^{3}n_{k}(n_{k}+1)\wp \left(
z+\tfrac{\omega_{k}}{2};\tau \right)  +E\bigg]  y(z),\quad z\in\mathbb{C}.\label{GLE-1}%
\end{equation}
It is known (cf. \cite{GW1,Tak1}) that the monodromy representation of GLE
(\ref{GLE-1}) is a group homomorphism $\rho(\cdot;E):\pi_{1}(E_{\tau
})\rightarrow SL(2,\mathbb{C})$ and so abelian. Let $\ell_{j}\in \pi_{1}(E_{\tau})$, $j=1,2$,
be the two fundamental cycles $z\to z+\omega_j$ and let $\rho(\ell_{j};E)$ denote the monodromy matrix of GLE (\ref{GLE-1})
with respect to any linearly independent solutions $(y_1,y_2)$, i.e.
\[
(y_{1}(z+\omega_j),y_{2}(z+\omega_j))=(y_{1}(z), y_{2}(z))\rho(\ell_{j};E),\quad j=1,2.
\]
Define
\begin{equation}
\label{tildes}\tilde{\mathcal{S}}^{\mathbf{n}}:=\{E\in \mathbb{C}\,|\, -2\leq \operatorname{tr} \rho
(\ell_{1}; E)\leq2\}.\end{equation}
Clearly $\operatorname{tr} \rho(\ell_{1}; E)$ and so $\tilde{\mathcal{S}}^{\mathbf{n}}$ are independent of the choice
of $(y_1,y_2)$.

\begin{lemma}\label{lem-spect-in}
The spectrum $\sigma(L)$ of the operator $L$ in (\ref{hill-operator}) satisfies
$\sigma(L)=\tilde{\mathcal{S}}^{\mathbf{n}}$, i.e. $\sigma(L)$ is independent of the choice of $z_0$.
\end{lemma}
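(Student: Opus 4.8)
The plan is to show that the spectrum $\sigma(L)$, which by the Rofe-Beketov theorem \eqref{spe-S} equals the conditional stability set $\mathcal{S}=\Delta^{-1}([-2,2])$, coincides with $\tilde{\mathcal{S}}^{\mathbf{n}}$ by identifying the Hill's discriminant $\Delta(E)$ of the operator $L$ with $\operatorname{tr}\rho(\ell_1;E)$, the trace of the monodromy matrix of the GLE \eqref{GLE-1} along the cycle $\ell_1\colon z\mapsto z+\omega_1=z+1$. Since $\tilde{\mathcal{S}}^{\mathbf{n}}$ is manifestly independent of $z_0$ (it is defined purely in terms of the GLE on $E_\tau$), establishing this identity immediately yields both assertions of the lemma at once.

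First I would observe that the operator $L$ is obtained from the GLE by the shift $z=x+z_0$ with $x$ real: writing $y(z)=y^{\mathbf{n}}(x+z_0;E)$, the equation $Ly=Ey$ in \eqref{hill-operator} is exactly the restriction of the GLE \eqref{GLE-1} to the real line $\{x+z_0\mid x\in\mathbb{R}\}$. Since the potential $q^{\mathbf{n}}$ has period $\omega_1=1$ in the $z$-variable, translation by the period $\Omega=1$ in the $x$-variable corresponds precisely to the fundamental cycle $\ell_1$ of the GLE. Concretely, if $(y_1,y_2)$ are linearly independent solutions of the GLE, then their restrictions $(\tilde y_1,\tilde y_2)$ with $\tilde y_j(x):=y_j(x+z_0)$ form a fundamental system for \eqref{eq2-1}, and the monodromy matrix $M(E)$ defined by $(\tilde y_j(x+1))=(\tilde y_j(x))M(E)$ is identical to $\rho(\ell_1;E)$ read off from $(y_j(z+1))=(y_j(z))\rho(\ell_1;E)$. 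Taking traces gives $\Delta(E)=\operatorname{tr}M(E)=\operatorname{tr}\rho(\ell_1;E)$ for every $E\in\mathbb{C}$.

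With this identity in hand I would conclude $\mathcal{S}=\{E\mid -2\le\Delta(E)\le 2\}=\{E\mid -2\le\operatorname{tr}\rho(\ell_1;E)\le 2\}=\tilde{\mathcal{S}}^{\mathbf{n}}$, and then invoke \eqref{spe-S} to get $\sigma(L)=\mathcal{S}=\tilde{\mathcal{S}}^{\mathbf{n}}$. The right-hand side involves no reference to $z_0$, so $\sigma(L)$ is independent of the choice of $z_0$, as claimed.

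The only genuinely delicate point is the clean matching of the two monodromy notions: one must check that the trace is computed along the \emph{same} cycle and period, and that the analyticity of the GLE solutions in $z$ justifies identifying the monodromy read off along the real direction with the topological cycle $\ell_1$ on the torus. Because the GLE has only regular singular points at the half-periods (which $z_0$ is chosen to avoid along $\mathbb{R}$), its solutions continue analytically across any strip containing the shifted real line, so the analytic continuation defining $\rho(\ell_1;E)$ agrees with the real-variable shift by $1$; the trace is a similarity invariant and hence independent of the basis $(y_1,y_2)$, which removes any ambiguity. Once this bookkeeping is settled the identification is immediate and the lemma follows.
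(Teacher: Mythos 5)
Your proposal is correct and follows essentially the same route as the paper: restrict a fundamental system of the GLE to the shifted real line, identify the monodromy matrix $M(E)$ of $Lw=Ew$ with $\rho(\ell_1;E)$, take traces to get $\Delta(E)=\operatorname{tr}\rho(\ell_1;E)$, and invoke the Rofe-Beketov identity (\ref{spe-S}) together with (\ref{tildes}). The extra remarks about analytic continuation across the strip are fine but not needed beyond what the paper already asserts.
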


\begin{proof}
Clearly if $(y_{1}(z),y_{2}(z))$ is a pair of linearly
independent solutions of GLE (\ref{GLE-1}), then $(w_{1}(x), w_{2}%
(x)):=(y_{1}(x+z_0),y_{2}(x+z_0))$ with $x\in \mathbb{R}$ is a pair of linearly
independent solutions of $Lw=Ew$. Thus, $\rho(\ell_{1};E)$ is also the monodromy
matrix $M(E)$ (defined in Section \ref{Floquet-theory}) of $Lw=Ew$, which gives
$\Delta(E)=\operatorname{tr}\rho(\ell_{1};E)$
and so we obtain the desired identity
$\sigma(L)=\tilde{\mathcal{S}}^{\mathbf{n}}$ by using (\ref{spe-S}) and (\ref{tildes}).
\end{proof}

\begin{lemma}
\label{lemma5} \cite[Lemma 3.6]{CL-AJM} Let $\tau \in i\mathbb{R}_{>0}$ and
suppose $Q^{\mathbf{n}}(E;\tau)$ has $2g+1$ real distinct zeros, denoted by
$E_{2g}<E_{2g-1}<\cdots<E_{1}<E_{0}$. Then the spectrum $\sigma(L)$ of the operator $L$ in (\ref{hill-operator}) satisfies
\begin{equation}
\label{spectrum1}\sigma(L)=\tilde{\mathcal{S}}^{\mathbf{n}}=(-\infty, E_{2g}]\cup[E_{2g-1}, E_{2g-2}]\cup \cdots \cup[E_{1}, E_{0}].
\end{equation}

\end{lemma}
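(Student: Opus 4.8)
The plan is to argue entirely through the entire function $\Delta(E):=\operatorname{tr}\rho(\ell_1;E)$, using $\sigma(L)=\tilde{\mathcal S}^{\mathbf n}=\{E\mid -2\le\Delta(E)\le 2\}$ from Lemma~\ref{lem-spect-in} and \eqref{spe-S} together with the structure Theorem~2.A, and \emph{not} through real-valuedness of the potential (which genuinely fails here). Write $I:=(-\infty,E_{2g}]\cup[E_{2g-1},E_{2g-2}]\cup\cdots\cup[E_1,E_0]$. The one analytic fact that substitutes for Ince's real-valuedness is a conjugation symmetry: for $\tau\in i\mathbb R_{>0}$ the lattice $\mathbb Z+\mathbb Z\tau$ is invariant under $z\mapsto\bar z$ and each half-period $\omega_k/2$ is fixed modulo the lattice by conjugation, so $\overline{q^{\mathbf n}(z;\tau)}=q^{\mathbf n}(\bar z;\tau)$. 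Hence if $y$ solves GLE~\eqref{GLE-1} at $E$, then $u(z):=\overline{y(\bar z)}$ solves it at $\bar E$; comparing the monodromies along $\ell_1$ gives $\rho(\ell_1;\bar E)=\overline{\rho(\ell_1;E)}$ up to conjugacy, and taking traces yields $\Delta(\bar E)=\overline{\Delta(E)}$. In particular $\Delta$ is real on $\mathbb R$, the set $\mathcal S$ is symmetric about $\mathbb R$, and $p_i(\bar\lambda)=p_i(\lambda)$, so the function $D$ in \eqref{eq3-2} is real on $\mathbb R$ with $D(E)^2\ge 0$ there.

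Next I would extract the combinatorics from Theorem~2.A. Since the $2g+1$ roots $E_0>\cdots>E_{2g}$ of $Q^{\mathbf n}$ are simple, each satisfies $d(E_j)-2p_i(E_j)=1$, so $d(E_j)=2p_i(E_j)+1$ is odd; by Theorem~2.A(ii) these are exactly the finite endpoints of the analytic arcs, with $d(E_j)$ semi-arcs emanating from each. Counting incidences of semi-arcs with their finite endpoints in two ways — by arcs, giving $2\tilde g+1$ (there are $\tilde g$ bounded arcs and one semi-infinite arc), and by points, giving $\sum_j d(E_j)=2\sum_j p_i(E_j)+(2g+1)$ — produces $\tilde g=g+\sum_j p_i(E_j)$. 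As $\tilde g\le g$ and $p_i\ge 0$, this forces $\tilde g=g$, $p_i(E_j)=0$ and $d(E_j)=1$ for all $j$. Thus $\sigma(L)$ is exactly $g+1$ simple analytic arcs, pairwise disjoint, whose $2g+1$ endpoints are the distinct real points $E_j$, each endpoint lying on a single arc.

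I would then pin down the real picture and match. On $\mathbb R$ the identity \eqref{hyper} reads $\Delta(E)^2-4=C^{-1}Q^{\mathbf n}(E;\tau)D(E)^2$ with $D^2\ge 0$ and $Q^{\mathbf n}(E;\tau)=c\prod_j(E-E_j)$ real. Since $\sigma(L)$ is bounded above by Theorem~2.A(ii), every real $E>E_0$ satisfies $\Delta(E)^2-4>0$, which together with $D^2>0$ there fixes $C^{-1}c>0$. The sign of $\prod_j(E-E_j)$ now shows $\Delta(E)^2-4\le 0$ precisely on the closure of $I$, with equality only at the $E_j$, and $\Delta(E)^2-4>0$ on every bounded gap and on $(E_0,\infty)$; hence $I\subseteq\mathcal S$. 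Finally, each connected component of $I$ is a connected subset of $\mathcal S$ containing branch points only at its ends, so it is contained in a single arc; as its endpoints are branch points they must be the endpoints of that arc, and a simple arc joining two points and containing the straight segment between them must coincide with that segment. Therefore each of the $g$ finite intervals, and the ray $(-\infty,E_{2g}]$, equals one of the arcs, and these exhaust all $g+1$ arcs, giving $\sigma(L)=\mathcal S=I$.

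The hard part is the reality of $\Delta$ on $\mathbb R$: because $q^{\mathbf n}$ is genuinely complex-valued, Ince's method does not apply, and one must instead exploit the weaker symmetry $\overline{q^{\mathbf n}(z)}=q^{\mathbf n}(\bar z)$ available only for $\tau\in i\mathbb R_{>0}$. The second subtle point is ruling out complex spectrum and spurious points inside the gaps: the symmetry by itself allows an arc to bulge off $\mathbb R$ while retaining real endpoints, so it is essential that the exact count of $g+1$ arcs from the combinatorial step is completely used up by the real intervals produced by the sign analysis — only the combination of the reality input with Theorem~2.A closes this possibility.
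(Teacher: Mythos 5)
Your setup (Lemma \ref{lem-spect-in}, Theorem 2.A, the conjugation symmetry $\Delta(\bar E)=\overline{\Delta(E)}$ for $\tau\in i\mathbb{R}_{>0}$, and the sign analysis giving $I\subseteq\mathcal{S}$) is sound, and the symmetry step is exactly the paper's fact (a) (cited there as \cite[Lemma 3.5]{CL-AJM}). But the route you take from there to $\sigma(L)=I$ has two unjustified steps, and they are not minor. First, the incidence count ``by arcs, giving $2\tilde g+1$'' presupposes that every one of the $d(E_j)$ semi-arcs meeting at $E_j$ is the \emph{endpoint} of a distinct arc, i.e.\ that no arc passes through an $E_j$ in its interior. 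Theorem 2.A only says the finite endpoints of arcs lie among the $E_j$ with $d$ odd and that $d(E_j)$ local branches of $\sigma(L)$ meet there; if some arc traverses an $E_j$, the arc-side count exceeds $2\tilde g+1$ and you only get $\tilde g\le g+\sum_j p_i(E_j)$, which combined with $\tilde g\le g$ yields nothing. So $d(E_j)=1$ and $\tilde g=g$ are not established by the counting alone. Second, and more seriously, the final step asserts that each component of $I$ ``contains branch points only at its ends'' and hence sits inside, and equals, a single arc. This is precisely what needs proof: at an interior point $E^*$ of a band one can have $\Delta(E^*)=\pm2$ with $d(E^*)=2p_i(E^*)\ge 2$ even (your own sign identity $\Delta^2-4=Q\,D^2/C$ permits zeros of $D$ inside the bands, and Theorem \ref{thm-TV-open-gap} of this paper shows such interior points with $\Delta=\pm2$ genuinely occur for DTV potentials), or $\Delta'(E^*)=0$ with $\Delta(E^*)\in(-2,2)$; at such points $\mathcal{S}$ can branch off the real axis. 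Your closing remark that the arc count is ``completely used up'' by the real intervals is therefore circular: it relies on the two claims above, which are exactly where an arc could bulge into $\mathbb{C}\setminus\mathbb{R}$.

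The paper closes this possibility with an ingredient you never invoke: the path-connectedness of $\mathbb{C}\setminus\sigma(L)$ (fact (b), from \cite[Theorem 2.2]{GW2}). Since all finite endpoints of the arcs are real and $\sigma(L)$ is symmetric about $\mathbb{R}$, any excursion of an arc into the upper half-plane, paired with its mirror image, would disconnect the complement; hence $\sigma(L)\subset\mathbb{R}$. Only \emph{after} that does one conclude $d(E_j)\le 2$ (a subset of $\mathbb{R}$ admits at most two local branches at a point), whence $d(E_j)=1$ by oddness, and the interval structure \eqref{spectrum1} follows. You should either import this connectedness argument or find a genuine substitute; as written, the proposal does not exclude complex spectrum.
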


\begin{proof} We sketch the proof here for later usage. Though the DTV potential $q^{\mathbf{n}}(z;\tau)$ is real-valued for $z\in\mathbb{R}$, it has poles at $\mathbb{Z}$ and $\frac{1}{2}+\mathbb{Z}$.
Instead, $q(x;\tau)=q^{\mathbf{n}}(x+z_0;\tau)$ is smooth on
$\mathbb{R}$ with period $\Omega=1$ but {\it not necessarily real-valued}. Thus the classic theory can not be applicable to this $q(x;\tau)$ to obtain (\ref{spectrum1}) either.

Under our assumptions, by (\ref{hyper}) in Theorem
2.A-(i)  we have
\begin{equation}\label{d-odd}
d(E_{j}):=\operatorname{ord}_{E_{j}}(\Delta(\cdot)^{2}-4)=1+2p_{i}(E_{j})\;\;\text{is \textit{odd} for all $j\in \lbrack0,2g]$.}
\end{equation}
On the other hand,
Theorem 2.A-(ii) says that: The spectrum $\sigma(L)$ consists of
finitely many bounded spectral arcs $\sigma_{k}$, $1\leq k\leq \tilde{g}$ for
some $\tilde{g}\leq g$ and {\it one} semi-infinite arc $\sigma_{\infty}$ which tends
to $-\infty+\langle q\rangle$, i.e.
\[
\sigma(L)=\sigma_{\infty}\cup \cup_{k=1}^{\tilde{g}}\sigma_{k}.
\]
Furthermore, the set of the finite end points of such arcs is precisely $\{E_{j}
\}_{j=0}^{2g}$ because of (\ref{d-odd}), and there are exactly $d(E_{j})$
semi-arcs of $\sigma(L)$ meeting at each $E_{j}$. Together these with the following three facts:

\begin{itemize}
\item[(a)] We proved in \cite[Lemma 3.5]{CL-AJM} that  $\tilde{\mathcal{S}}^{\mathbf{n}}$ is symmetric with respect to the
real line $\mathbb{R}$ if $\tau\in i\mathbb{R}_{>0}$, so does $\sigma(L)=\tilde{\mathcal{S}}^{\mathbf{n}}$ by Lemma \ref{lem-spect-in};

\item[(b)] A classical result (see e.g. \cite[Theorem 2.2]{GW2}) says that
$\mathbb{C}\setminus \sigma(L)$ is path-connected;

\item[(c)] Our assumption gives $E_{j}\in \mathbb{R}$ and $E_{2g}<E_{2g-1}%
<\cdots<E_{1}<E_{0}$;
\end{itemize}

\noindent we easily conclude that (i) $\sigma(L)\subset \mathbb{R}$, (ii) $d(E_{j})=1$
 for all $j$ and so (\ref{spectrum1}) holds.
Indeed, since (c) says that all finite end points of spectral arcs are on $\mathbb{R}$, the assertion (i) $\sigma(L)\subset \mathbb{R}$ follows immediately from (a)-(b). Consequently, there are at most two semi-arcs of $\sigma(L)$ meeting at each $E_j$. This, together with (\ref{d-odd}), yields the assertion (ii) $d(E_j)=1$ for all $j$, namely there is exactly one semi-arc of $\sigma(L)$ ending at $E_j$, which finally implies (\ref{spectrum1}).
\end{proof}

\begin{lemma}
\label{lemma3} \cite[Lemma 3.7]{CL-AJM} If
\[
Q^{(n_{0},n_{1},n_{2},n_{3})}(E;\tau)=\prod_{j=0}^{2g}(E-E_{j}(\tau)),
\]
Then
\begin{equation}
\label{modular1}Q^{(n_{0},n_{2},n_{1},n_{3})}(E;\tfrac{-1}{\tau})=\prod
_{j=0}^{2g}(E-\tau^{2}E_{j}(\tau)).
\end{equation}

\end{lemma}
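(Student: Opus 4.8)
The plan is to trace how the DTV potential and its minimal commuting operator transform under the modular substitution $\tau\mapsto-1/\tau$, and then read the spectral polynomial off the Burchnall--Chaundy relation (\ref{eqeq}). First I would record the scaling law of the Weierstrass function. Writing $\wp(\,\cdot\,;\tau)=\wp(\,\cdot\,;\mathbb{Z}+\mathbb{Z}\tau)$ and observing that $\tau\bigl(\mathbb{Z}+\mathbb{Z}(-1/\tau)\bigr)=\mathbb{Z}+\mathbb{Z}\tau$, the homogeneity $\wp(\lambda z;\lambda L)=\lambda^{-2}\wp(z;L)$ gives
\[\wp(z;-1/\tau)=\tau^{2}\,\wp(\tau z;\tau).\]

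Next I would track the four half-periods. For the lattice $\mathbb{Z}+\mathbb{Z}(-1/\tau)$ denote the half-periods by $\tilde\omega_k/2$; each gets multiplied by $\tau$ inside $\wp(\tau z+\tau\tilde\omega_k/2;\tau)$ and then reduced modulo $\mathbb{Z}+\mathbb{Z}\tau$. A short computation gives $\tau\tilde\omega_0/2\equiv\omega_0/2$, $\tau\tilde\omega_1/2\equiv\omega_2/2$, $\tau\tilde\omega_2/2\equiv\omega_1/2$, $\tau\tilde\omega_3/2\equiv\omega_3/2$; that is, $\tau\mapsto-1/\tau$ interchanges the indices $1$ and $2$ while fixing $0$ and $3$. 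Feeding this into (\ref{Tre}) yields the potential identity
\[q^{(n_0,n_2,n_1,n_3)}(z;-1/\tau)=\tau^{2}\,q^{(n_0,n_1,n_2,n_3)}(\tau z;\tau).\]

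Then I would pass to the operators, writing $\mathbf m=(n_0,n_2,n_1,n_3)$ and $L_{\mathbf n,\tau}:=\tfrac{d^2}{dz^2}+q^{\mathbf n}(z;\tau)$. Setting $w=\tau z$ and using $\tfrac{d^2}{dz^2}=\tau^{2}\tfrac{d^2}{dw^2}$, the potential identity shows that the conjugation $\Phi$ by $w=\tau z$ satisfies $L_{\mathbf m,-1/\tau}=\tau^{2}\,\Phi(L_{\mathbf n,\tau})$, i.e. $\Phi(L_{\mathbf n,\tau})=\tau^{-2}L_{\mathbf m,-1/\tau}$. Since commutation is preserved under conjugation and under multiplication by the scalar $\tau^2$, the operator $P_{2g+1}$ of (\ref{odd-op}) for $\mathbf n$ at $\tau$, rewritten in the $z$-variable and rescaled by $\tau^{2g+1}$ to restore a unit leading coefficient, is an admissible monic commuting operator for $\mathbf m$ at $-1/\tau$. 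Here I would invoke that the genus $g$ in (\ref{genus}) depends only on the multiset $\{n_k\}$, hence is unchanged by the swap $1\leftrightarrow2$; combined with the uniqueness of the smallest-order monic commuting operator, this identifies the rescaled operator with $P_{2g+1}^{\mathbf m,-1/\tau}$.

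Finally I would square. By (\ref{eqeq}) the square of the minimal commuting operator at $-1/\tau$ is $Q^{\mathbf m}(L_{\mathbf m,-1/\tau};-1/\tau)$, while squaring the rescaled $\Phi$-image of $P_{2g+1}^{\mathbf n,\tau}$ and using (\ref{eqeq}) at $\tau$ together with $\Phi(L_{\mathbf n,\tau})=\tau^{-2}L_{\mathbf m,-1/\tau}$ produces $\tau^{2(2g+1)}Q^{\mathbf n}(\tau^{-2}L_{\mathbf m,-1/\tau};\tau)$. Equating the two expressions and using that the powers of $L_{\mathbf m,-1/\tau}$ are linearly independent (they have distinct orders) lets me replace the operator by an indeterminate $E$, giving the polynomial identity
\[Q^{\mathbf m}(E;-1/\tau)=\tau^{2(2g+1)}\,Q^{\mathbf n}(\tau^{-2}E;\tau).\]
Substituting $Q^{\mathbf n}(E;\tau)=\prod_{j=0}^{2g}(E-E_j(\tau))$ and distributing the $2g+1$ factors $\tau^{-2}$ across the product then cancels the prefactor and yields exactly (\ref{modular1}). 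The step I expect to be the main obstacle is the operator bookkeeping in the third paragraph: verifying that the rescaled change-of-variables image of $P_{2g+1}^{\mathbf n,\tau}$ is \emph{genuinely} the minimal commuting operator at $-1/\tau$ (so that its square equals $Q^{\mathbf m}$ rather than a polynomial multiple), which rests squarely on the equality of the two genera and the uniqueness clause built into the definition of $P_{2g+1}$.
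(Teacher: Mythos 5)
Your argument is correct, and in fact the paper offers no proof of Lemma \ref{lemma3} at all --- it is imported verbatim from \cite{CL-AJM} --- so there is nothing in-text to compare against; your route (homogeneity $\wp(z;-1/\tau)=\tau^{2}\wp(\tau z;\tau)$, the half-period bookkeeping giving the swap $1\leftrightarrow2$ with $0$ and $3$ fixed, the conjugation identity $\Phi(L_{\mathbf n,\tau})=\tau^{-2}L_{\mathbf m,-1/\tau}$, and the resulting scaling law $Q^{\mathbf m}(E;-1/\tau)=\tau^{2(2g+1)}Q^{\mathbf n}(\tau^{-2}E;\tau)$) is the natural one and all the computations check out, including the invariance of $g$ under permuting the $n_k$. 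The one step you rightly flag as delicate deserves a slightly sharper justification than ``uniqueness of the smallest-order operator of the form (\ref{odd-op})'': that form alone does not determine $P_{2g+1}$, since $P_{2g+1}+R(L)$ with $\deg R\le g-1$ is again of the form (\ref{odd-op}), commutes with $L$, and has the same order; what singles out $P_{2g+1}$ is the Burchnall--Chaundy normalization $P_{2g+1}^{2}\in\mathbb{C}[L]$ in (\ref{eqeq}). Your rescaled operator $\tilde P:=\tau^{2g+1}\Phi^{-1}P_{2g+1}^{\mathbf n,\tau}\Phi$ inherits exactly this property, since $\tilde P^{2}=\tau^{2(2g+1)}Q^{\mathbf n}(\tau^{-2}L_{\mathbf m,-1/\tau};\tau)$ is a polynomial in $L_{\mathbf m,-1/\tau}$; then $\tilde P-P_{2g+1}^{\mathbf m,-1/\tau}$ lies in the centralizer, has order at most $2g-1$ (both operators are monic with vanishing order-$2g$ coefficient), hence equals some $R(L_{\mathbf m,-1/\tau})$ by minimality of $2g+1$, and squaring forces $2P_{2g+1}^{\mathbf m,-1/\tau}R(L_{\mathbf m,-1/\tau})\in\mathbb{C}[L_{\mathbf m,-1/\tau}]$, whence $R=0$. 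With that sentence added, the identification $\tilde P=P_{2g+1}^{\mathbf m,-1/\tau}$ is airtight and the polynomial identity, hence (\ref{modular1}), follows exactly as you wrote.
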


The following result, which is our new observation comparing to \cite{CL-AJM}, is quite surprising to us. It plays a crucial role in proving Theorem \ref{sharp-realroot}.

\begin{proposition}\label{real-imply-distinct}
Let $\tau \in i\mathbb{R}_{>0}$ and suppose all zeros of $Q^{\mathbf{n}}(E;\tau)$ are real, denoted by
$E_{2g}\leq E_{2g-1}\leq \cdots\leq E_{1}\leq E_{0}$.
Then all the zeros are distinct, i.e. $E_i\neq E_j$ for $i\neq j$.
\end{proposition}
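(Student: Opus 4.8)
The plan is to argue by contradiction, reducing the statement to the exclusion of a single degenerate configuration. First I would recover the band picture exactly as in the proof of Lemma \ref{lemma5}. By hypothesis every zero of $Q^{\mathbf n}(\cdot;\tau)$ is real; in particular every zero of \emph{odd} order is real, and by Theorem 2.A-(ii) these are precisely the finite endpoints of the spectral arcs, so all finite endpoints lie on $\mathbb{R}$. Combining this with the symmetry of $\sigma(L)=\tilde{\mathcal S}^{\mathbf n}$ about $\mathbb{R}$ (\cite[Lemma 3.5]{CL-AJM}, via Lemma \ref{lem-spect-in}) and the path-connectedness of $\mathbb{C}\setminus\sigma(L)$, no spectral arc can leave $\mathbb{R}$: an arc together with its complex-conjugate mirror image would otherwise bound a region that disconnects the complement. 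Hence $\sigma(L)\subset\mathbb{R}$.

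Once $\sigma(L)\subset\mathbb{R}$, the local structure of $\mathcal S=\Delta^{-1}([-2,2])$ near a point $E_0$ with $d(E_0)=\operatorname{ord}_{E_0}(\Delta^2-4)=k$ consists of exactly $k$ semi-arcs, while a subset of $\mathbb{R}$ can carry at most two semi-arcs at any point. Thus $d(E_j)\leq 2$ for every zero $E_j$ of $Q^{\mathbf n}$. Writing $d(E_j)=\operatorname{ord}_{E_j}Q^{\mathbf n}+2p_i(E_j)$ from (\ref{hyper}), a zero of odd order has $d(E_j)=1$, forcing $p_i(E_j)=0$ and $\operatorname{ord}_{E_j}Q^{\mathbf n}=1$, i.e. it is \emph{simple}. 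Therefore a hypothetical multiple zero $E^\ast$ must satisfy $d(E^\ast)=2$, $p_i(E^\ast)=0$, $\operatorname{ord}_{E^\ast}Q^{\mathbf n}=2$: it is an interior point of a band at which $\Delta(E^\ast)=\pm 2$ and the monodromy $M(E^\ast)$ is a nontrivial Jordan block, i.e. an embedded exceptional point. The entire problem is thereby reduced to excluding such points.

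To exclude them I would exploit the reality at $\tau\in i\mathbb{R}_{>0}$. Since $\sigma(L)$ does not depend on $z_0$ (Lemma \ref{lem-spect-in}), I may take $z_0\in i\mathbb{R}$ so that $q(x;\tau)$ is $PT$-symmetric, $q(-x;\tau)=\overline{q(x;\tau)}$ for $x\in\mathbb{R}$. This gives the monodromy constraint $P\overline{M(E)}P=M(E)^{-1}$ with $P=\operatorname{diag}(1,-1)$ for real $E$ (recovering $\Delta(\bar E)=\overline{\Delta(E)}$), and at $E^\ast$ (say $\Delta(E^\ast)=2$) it forces $M(E^\ast)=\bigl(\begin{smallmatrix}1+i\gamma & b\\ c & 1-i\gamma\end{smallmatrix}\bigr)$ with $b,c\in\mathbb{R}$ and $bc=\gamma^2$. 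Moreover $Q^{\mathbf n}\leq 0$ on each band (from (\ref{hyper}) and the sign of $\Delta^2-4$), so $Q^{\mathbf n}(E)=(E-E^\ast)^2R(E)$ with $R(E^\ast)<0$; hence $E^\ast$ is an \emph{acnode} of $\{W^2=Q^{\mathbf n}(E)\}$, and complex conjugation exchanges the two sheets, so it exchanges the two Bloch solutions along $\ell_1$. The goal is then to prove that these two conjugate Bloch solutions remain linearly independent at $E^\ast$, which yields $M(E^\ast)=\pm I$, whence $p_i(E^\ast)\geq 1$ and $\operatorname{ord}_{E^\ast}Q^{\mathbf n}=d(E^\ast)-2p_i(E^\ast)\leq 0$, contradicting that $E^\ast$ is a zero. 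Concretely I would use the explicit Hermite--Halphen/Bloch solutions of the generalized Lamé equation together with the results of \cite{Tak5} to evaluate the one-period quantity $\oint_{\ell_1}\psi^{-2}\,dw$ (whose vanishing is equivalent to $M(E^\ast)=\pm I$) and show it vanishes.

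The main obstacle is exactly this last step. The constraint $P\overline{M}P=M^{-1}$ alone is \emph{not} sufficient: it is perfectly compatible with a nontrivial Jordan block having real eigenvalue $\pm 1$ (a genuine $PT$-exceptional point), and one checks that the acnode structure does not by itself force the two conjugate Bloch solutions apart. Hence ruling out the double zero cannot be purely soft and must invoke the algebro-geometric finite-gap structure of the generalized Lamé equation. Establishing the vanishing of $\oint_{\ell_1}\psi^{-2}\,dw$ for $\tau\in i\mathbb{R}_{>0}$ — equivalently, that the collision of band edges never occurs as a real exceptional point but always pushes a conjugate pair of zeros off $\mathbb{R}$ — is where the real work, and the ``surprise'' noted by the authors, resides.
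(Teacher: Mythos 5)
Your reduction is sound and coincides with the first half of the paper's argument: from the reality of all zeros you correctly recover $\sigma(L)=\tilde{\mathcal S}^{\mathbf n}(\tau)\subset\mathbb{R}$, deduce $d(E_j)\le 2$ from the fact that at most two semi-arcs of a subset of $\mathbb{R}$ can meet at a point, and conclude via (\ref{hyper}) that any hypothetical multiple zero $E^\ast$ must be a double zero with $p_i(E^\ast)=0$, $d(E^\ast)=2$, sitting in the \emph{interior} of a band. This is exactly the dichotomy the paper sets up in (\ref{Ejdouble})--(\ref{Sboundary}): boundary points of the spectrum are precisely the simple zeros, double zeros are interior points.

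The gap is the exclusion step, and you have identified it yourself: the $PT$-symmetry constraint $P\overline{M(E)}P=M(E)^{-1}$ does not rule out a nontrivial Jordan block with eigenvalue $\pm1$, and the proposed computation of $\oint_{\ell_1}\psi^{-2}\,dw$ is left entirely open, so the proof is not complete. The paper's actual mechanism is much softer and makes no use of the monodromy at $E^\ast$ at all. It is a counting/ordering induction on consecutive roots $E_{k}$ versus $E_{k+1}$: once $E_0>\cdots>E_{k-1}>E_k$ is known, the top bands $[E_{2j-1},E_{2j-2}]$ are pinned down, and a double zero $E_k=E_{k+1}$, being interior, would force its band to have a finite endpoint $\tilde E_k$ strictly between $E_k$ and $E_{k-1}$; since endpoints must be zeros of $Q^{\mathbf n}$, this is a contradiction --- \emph{but only when $k$ is even}. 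When $k$ is odd the offending band could a priori be the semi-infinite arc, and no contradiction arises from the $\omega_1$-direction alone. The missing ingredient is Lemma \ref{lemma3}: under $\tau\mapsto -1/\tau$ the roots become $\tau^2E_j$ with $\tau^2<0$, so their order is \emph{reversed} and the same band analysis for $\tilde{\mathcal S}^{(n_0,n_2,n_1,n_3)}(-1/\tau)$, now anchored at $-\infty$ from the opposite end, kills the odd-$k$ case by the identical argument. Alternating between the two pictures according to the parity of $k$ completes the induction. This modular-transformation trick, not any analysis of exceptional points or Bloch functions, is the ``new observation'' that makes the proposition work; without it (or a completed version of your period-integral computation) the double zero cannot be excluded.
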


\begin{proof} In the following proof, we write $\tilde{\mathcal{S}}^{\mathbf{n}}=\tilde{\mathcal{S}}^{\mathbf{n}}
(\tau)$ to emphasize its dependence on $\tau$.
Note that
\[Q^{(n_{0}, n_{1},
n_{2}, n_{3})}(E;\tau)=\prod_{j=0}^{2g}(E-E_j),\quad E_j\in\mathbb{R}.\]
Then by the same proof as Lemma \ref{lemma5}, we have
\begin{equation}\label{S-tilde}\tilde{\mathcal{S}}^{(n_{0},n_{1},n_{2},n_{3})}
(\tau)=\sigma(L)=\sigma_{\infty}\cup \cup_{k=1}^{\tilde{g}}\sigma_{k}\subset\mathbb{R},\end{equation}
where $\tilde{g}\leq g$, $\sigma_{\infty}$ is the only semi-infinite arc which tends
to $-\infty$, and the set of the finite end points of such arcs is precisely those $\{E_{j}
\}_{0\le j\le 2g}$ with
\[d(E_j)=ord_{E_j}Q^{(n_{0}, n_{1},
n_{2}, n_{3})}(\cdot;\tau)+2p_i(E_j)\;\text{being odd}.\]
Since there are $d(E_{j})$
semi-arcs of $\tilde{\mathcal{S}}^{(n_{0},n_{1},n_{2},n_{3})}
(\tau)$ meeting at $E_{j}$, it follows from (\ref{S-tilde}) that $d(E_j)\le 2$, i.e. $p_i(E_j)=0$ and
\[ord_{E_j}Q^{(n_{0}, n_{1},
n_{2}, n_{3})}(\cdot;\tau)=d(E_j)\leq 2\quad\text{for all}\;j.\]
Furthermore,
\begin{equation}\label{Ejdouble}ord_{E_j}Q^{(n_{0}, n_{1},
n_{2}, n_{3})}(\cdot;\tau)=2\end{equation}
\[\Longleftrightarrow\text{$E_{j}$ is an interior point of $\tilde{\mathcal{S}}^{(n_{0},n_{1},n_{2},n_{3})}
(\tau)$},\]
and so
\begin{equation}\label{Sboundary}
\partial\tilde{\mathcal{S}}^{(n_{0},n_{1},n_{2},n_{3})}
(\tau)=\{-\infty\}\cup\{E_j\,|\, ord_{E_j}Q^{(n_{0}, n_{1},
n_{2}, n_{3})}(\cdot;\tau)=1\}.
\end{equation}

On the other hand, Lemma \ref{lemma3} and $\tau\in i\mathbb{R}_{>0}$ give
\[Q^{(n_{0},n_{2},n_{1},n_{3})}(E;\tfrac{-1}{\tau})=\prod
_{j=0}^{2g}(E-\tau^{2}E_{j}(\tau))
\]
with
\[\tau^2 E_0\leq \tau^2 E_1\leq\cdots\leq \tau^2 E_{2g-1}\leq \tau^2E_{2g}.\]
Therefore, the same argument as above shows that
\begin{equation}\label{Ejdouble1}ord_{E_j}Q^{(n_{0}, n_{1},
n_{2}, n_{3})}(\cdot;\tau)
=ord_{\tau^2E_j}Q^{(n_{0},n_{2},n_{1},n_{3})}(\cdot;\tfrac{-1}{\tau})=2\end{equation}
\[\Longleftrightarrow\text{$\tau^2E_{j}$ is an interior point of $\tilde{\mathcal{S}}^{(n_{0},n_{2},n_{1},n_{3})}
(\tfrac{-1}{\tau})\subset\mathbb{R}$},\]
and so
\begin{equation}\label{Sboundary1}
\partial\tilde{\mathcal{S}}^{(n_{0},n_{2},n_{1},n_{3})}
(\tfrac{-1}{\tau})=\{-\infty\}\cup\{\tau^2E_j\,|\, ord_{E_j}Q^{(n_{0}, n_{1},
n_{2}, n_{3})}(\cdot;\tau)=1\}.
\end{equation}

Now we prove by induction that for any $1\leq k\leq 2g$, $E_{k-1}\neq E_{k}$.

Suppose $E_0=E_1$, then (\ref{Ejdouble}) says that $E_0\notin\partial\tilde{\mathcal{S}}^{(n_{0},n_{1},n_{2},n_{3})}
(\tau)$, namely there are $\tilde{E}>E_0$ and $\varepsilon>0$ such that $[E_0-\varepsilon, \tilde{E}]\subset \tilde{\mathcal{S}}^{(n_{0},n_{1},n_{2},n_{3})}
(\tau)$ with $\tilde{E}\in\partial\tilde{\mathcal{S}}^{(n_{0},n_{1},n_{2},n_{3})}
(\tau)$. Then (\ref{Sboundary}) implies $\tilde{E}\in \{E_j\}_{j=0}^{2g}$, a contradiction with $\tilde{E}>E_0=\max_jE_j$. This proves $E_0\neq E_1$.

Assume by induction that for any $1\le i\leq k$, where $1\leq k\leq 2g-1$, we have $E_{i-1}\neq E_{i}$, i.e.
\[E_k<E_{k-1}<\cdots<E_1<E_0.\] We need to prove $E_k> E_{k+1}$. Suppose by contradiction that $E_{k}=E_{k+1}$.

{\bf Case 1.} $k$ is even.

Then it follows from $\{E_j | j\le k-1\}\subset \partial\tilde{\mathcal{S}}^{(n_{0},n_{1},n_{2},n_{3})}
(\tau)$ and (\ref{S-tilde}) that
\[[E_{k-1},E_{k-2}]\cup\cdots \cup[E_1,E_0]\subset \tilde{\mathcal{S}}^{(n_{0},n_{1},n_{2},n_{3})}
(\tau)\]
and
\[E<E_{k-1},\;\forall\, E\in \tilde{\mathcal{S}}^{(n_{0},n_{1},n_{2},n_{3})}
(\tau)\setminus [E_{k-1},E_{k-2}]\cup\cdots \cup[E_1,E_0].\]
So $E_{k}=E_{k+1}$, (\ref{S-tilde}) and (\ref{Ejdouble}) imply that there are $E_{k}<\tilde{E}_k<E_{k-1}$ and $\varepsilon>0$ such that $[E_k-\varepsilon, \tilde{E}_k]\subset \tilde{\mathcal{S}}^{(n_{0},n_{1},n_{2},n_{3})}
(\tau)$ with $\tilde{E}_k\in\partial\tilde{\mathcal{S}}^{(n_{0},n_{1},n_{2},n_{3})}
(\tau)$. Again it follows from (\ref{Sboundary}) that $\tilde{E}_k\in \{E_j\}_{j=0}^{2g}$, a contradiction with $E_{k}<\tilde{E}_k<E_{k-1}$. Thus Case 1 is impossible.

{\bf Case 2.} $k$ is odd.

Then it follows from $\{-\infty\}\cup\{\tau^2E_j | j\le k-1\}\subset \partial\tilde{\mathcal{S}}^{(n_{0},n_{2},n_{1},n_{3})}
(\tfrac{-1}{\tau})$ and $\tilde{\mathcal{S}}^{(n_{0},n_{2},n_{1},n_{3})}
(\tfrac{-1}{\tau})\subset \mathbb{R}$ that
\[(-\infty,\tau^2E_{0}]\cup [\tau^2 E_1,\tau^2E_2]\cdots\cup [\tau^2E_{k-2},\tau^2 E_{k-1}]\subset \tilde{\mathcal{S}}^{(n_{0},n_{2},n_{1},n_{3})}
(\tfrac{-1}{\tau})\]
and
\[E>\tau^2E_{k-1},\;\forall\, E\in \tilde{\mathcal{S}}^{(n_{0},n_{2},n_{1},n_{3})}
(\tfrac{-1}{\tau})\setminus (-\infty,\tau^2E_{0}]\cup\cdots\cup [\tau^2E_{k-2},\tau^2 E_{k-1}].\]
So $E_{k}=E_{k+1}$ and (\ref{Ejdouble1}) imply that there are $E_{k}<\tilde{E}_k<E_{k-1}$ and $\varepsilon>0$ such that $[\tau^2\tilde{E}_k,\tau^2E_k+\varepsilon]\subset \tilde{\mathcal{S}}^{(n_{0},n_{2},n_{1},n_{3})}
(\tfrac{-1}{\tau})$ with $\tau^2\tilde{E}_k\in\partial\tilde{\mathcal{S}}^{(n_{0},n_{2},n_{1},n_{3})}
(\tfrac{-1}{\tau})$. But then (\ref{Sboundary1}) implies $\tilde{E}_k\in \{E_j\}_{j=0}^{2g}$, a contradiction with $E_{k}<\tilde{E}_k<E_{k-1}$. Thus Case 2 is impossible.

This proves $E_{k}> E_{k+1}$. By induction we obtain $E_i\neq E_j$ for $i\neq j$. The proof is complete.
\end{proof}

Now we can give the proof of Theorem \ref{sharp-realroot}.

\begin{proof}[Proof of Theorem \ref{sharp-realroot}] Let $\tau\in i\mathbb{R}_{>0}$. If $\mathbf{n}$ satisfies neither (\ref{c1}) nor (\ref{c2}), then Theorem A says that all the roots of $Q^{\mathbf{n}}(E;\tau)$ are real and distinct, and so the spectrum $\sigma(L)$ is given by (\ref{spectrum0-1}).

Now suppose $\mathbf{n}$ satisfies either (\ref{c1}) or (\ref{c2}). Recall (\ref{eq-22}) that for $\tau\in i\mathbb{R}_{>0}$, $Q^{\mathbf{n}}(E;\tau)\in \mathbb{R}[E]$, so all its complex roots appear in pairs in $\mathbb{C}\setminus\mathbb{R}$, i.e. if $E\in \mathbb{C}\setminus\mathbb{R}$ is a root, so does its conjugate $\overline{E}$.
Define
\[\Gamma:=\{\tau\in i\mathbb{R}_{>0}: \text{$Q^{\mathbf{n}}(\cdot;\tau)$ has at least two roots in $\mathbb{C}\setminus\mathbb{R}$}\}.\]
Then Theorem A and Proposition \ref{real-imply-distinct} imply that $\Gamma\neq \emptyset$. Clearly $\Gamma$ is open in $i\mathbb{R}_{>0}$.
Furthermore, if $\tau_m\in \Gamma$ such that $\tau_m\to\tau\in i\mathbb{R}_{>0}\setminus \Gamma$ as $m\to\infty$, then the roots of $Q^{\mathbf{n}}(\cdot;\tau)$ are all real and $Q^{\mathbf{n}}(\cdot;\tau)$ must have a multiple root (i.e. the limit of the complex roots $E_m, \overline{E}_m$ of $Q^{\mathbf{n}}(\cdot;\tau_m)$ is a multiple root of $Q^{\mathbf{n}}(\cdot;\tau)$), a contradiction with Proposition \ref{real-imply-distinct}. This proves that $\Gamma$ is also closed in $i\mathbb{R}_{>0}$ and so $\Gamma=i\mathbb{R}_{>0}$. This also implies that for any $\tau\in i\mathbb{R}_{>0}$, $\sigma(L)\not\subset \mathbb{R}$ because the zero set of  $Q^{\mathbf{n}}(\cdot;\tau)$ is a proper subset of $\sigma(L)$. Recalling the fact (a) recalled in the proof of Lemma \ref{lemma5}, $\sigma(L)$ is still symmetric with respect to $\mathbb{R}$. This completes the proof.
\end{proof}

\section{Location of (anti)periodic eigenvalues}
\label{sec-periodic}

This section is devoted to the proof of Theorem \ref{thm-TV-open-gap}.
For this purpose, we need to consider the trigonometric limit $\tau\to i\infty$. It is well known that
\[\wp(z;\tau)\to \tfrac{\pi^2}{(\sin\pi z)^2}-\tfrac{\pi^2}{3},\quad \wp(z+\tfrac{1}{2};\tau)\to \tfrac{\pi^2}{(\cos\pi z)^2}-\tfrac{\pi^2}{3},\]
\[\wp(z+\tfrac{\omega_k}{2};\tau)\to -\tfrac{\pi^2}{3}, \quad k=2,3,\]
uniformly on compact sets of $\mathbb{C}\setminus\frac{1}{2}\mathbb{Z}$ as $\tau\to i\infty$.
Define
\begin{equation}\label{eq-sym}
q_{T}^{\mathbf{n}}(z):=-n_0(n_0+1)\tfrac{\pi^2}{(\sin\pi z)^2}-n_1(n_1+1)\tfrac{\pi^2}{(\cos\pi z)^2}+C_T^{\mathbf{n}},
\end{equation}
where
\begin{equation}
C_T^{\mathbf{n}}:=\frac{\pi^2}{3}\sum_{k=0}^3n_k(n_k+1).
\end{equation}
Then the above argument shows that
\[q^{\mathbf{n}}(z;\tau)\to q_{T}^{\mathbf{n}}(z)\quad\text{as}\;\tau\to i\infty.\]

Fix any $z_0\in\mathbb{C}\setminus\mathbb{R}$. Then for $\tau\in i\mathbb{R}_{>0}$ with $\operatorname{Im}\tau>|z_0|$, both
\[q(x;\tau):=q^{\mathbf{n}}(x+z_0;\tau)\quad\text{and}\quad q_T(x):=q_{T}^{\mathbf{n}}(x+z_0)\]
are smooth on $\mathbb{R}$ with period $\Omega=1$. Recalling Section \ref{Floquet-theory}, we denote the Hill's discriminants of
\begin{equation}\label{eq-hill-ell}Ly(x)=y''(x)+q(x;\tau)y(x)=Ey(x)\end{equation}
and
\begin{equation}\label{eq-hill-tri}y''(x)+q_T(x)y(x)=Ey(x)\end{equation}
by $\Delta(E;\tau)$ and $\Delta_T(E)$ respectively. Now we apply the following key fact about $\Delta_T(E)$: Since $q_{T}^{\mathbf{n}}(z)$ can be generated from $C_T^{\mathbf{n}}$ by finite times of Darboux transformations (see \cite[Remark 2.7]{GUW}), it is known (see e.g. \cite[Remark 1.3]{CL-CMP}) that $\Delta_T(E)$ coincides with the Hill's discriminant of
$y''(x)+C_T^{\mathbf{n}}y(x)=Ey(x)$ with respect to the period $1$, i.e.
\[\Delta_T(E)=2\cos\sqrt{C_T^{\mathbf{n}}-E},\]
Consequently,
\begin{equation}\label{eq-4-3}
\Delta_T^{-1}(\pm 2)=\{C_T^{\mathbf{n}}-j^2\pi^2\,|\, j\in\mathbb{Z}_{\geq 0}\}.
\end{equation}

\begin{lemma}\label{lem-tri-limit}
Under the above notations, we have
\begin{equation}\label{eq-4-1}\lim_{\tau\to i\infty} \Delta(E;\tau)=\Delta_T(E)=2\cos\sqrt{C_T^{\mathbf{n}}-E}.\end{equation}
\end{lemma}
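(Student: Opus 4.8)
The plan is to prove the convergence $\Delta(E;\tau) \to \Delta_T(E)$ by showing that the monodromy matrices converge, which follows from continuous dependence of ODE solutions on coefficients. The core fact is that the potentials $q(\cdot;\tau)$ converge to $q_T$ uniformly on the real period interval $[x_0, x_0+1]$ (after the shift by $z_0$) as $\tau \to i\infty$. Since $z_0 \in \mathbb{C}\setminus\mathbb{R}$ is fixed and $\mathrm{Im}\,\tau > |z_0|$, the shifted argument $x+z_0$ stays in a compact subset of $\mathbb{C}\setminus\frac{1}{2}\mathbb{Z}$ as $x$ ranges over $[0,1]$, so the uniform convergence on compact sets recalled in the text (for $\wp(z+\frac{\omega_k}{2};\tau)$) applies directly to give $q(x;\tau)\to q_T(x)$ uniformly in $x\in[0,1]$.

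**First I would** fix the base point $x_0 \in \mathbb{R}$ and consider the special fundamental system $c(E,x,x_0;\tau), s(E,x,x_0;\tau)$ solving \eqref{eq-hill-ell} with the standard initial data at $x_0$, together with the analogous $c_T, s_T$ for \eqref{eq-hill-tri}. Recall from Section \ref{Floquet-theory} that
\[
\Delta(E;\tau) = c(E,x_0+1,x_0;\tau) + s'(E,x_0+1,x_0;\tau),
\]
and similarly for $\Delta_T(E)$. Thus it suffices to show that each entry of the monodromy matrix converges, i.e. that $c(E,\cdot;\tau)\to c_T(E,\cdot)$ and $s(E,\cdot;\tau)\to s_T(E,\cdot)$ (together with their $x$-derivatives) uniformly on $[x_0,x_0+1]$ as $\tau\to i\infty$. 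This is a standard continuous-dependence statement: writing each second-order equation as a first-order linear system $Y' = A(x;\tau)Y$ with $A$ depending on $q(x;\tau)-E$, the coefficient matrices converge uniformly on $[x_0,x_0+1]$, so by Gronwall's inequality the solutions with fixed initial data converge uniformly, along with their first derivatives (which are read off from the system). Evaluating at $x=x_0+1$ and taking the trace yields \eqref{eq-4-1}.

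**The main obstacle** is purely a matter of bookkeeping rather than substance: one must confirm that the convergence $q(x;\tau)\to q_T(x)$ is genuinely \emph{uniform} on the closed interval $[x_0,x_0+1]$, not merely pointwise. This requires observing that the set $\{x+z_0 : x\in[0,1]\}$ is a fixed compact set lying off the poles $\frac{1}{2}\mathbb{Z}$ (guaranteed by $z_0\notin\mathbb{R}$), so that the stated uniform-on-compacta convergence of the Weierstrass functions applies on a single compact set independent of $\tau$. Once uniform convergence of the potential is in hand, the Gronwall estimate gives uniform convergence of the monodromy data with no further difficulty, since $E$ is held fixed and the interval length is bounded. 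Finally, the last equality in \eqref{eq-4-1} is exactly the formula for $\Delta_T(E)$ already established in the text via the Darboux-transformation argument, so no additional computation is needed.
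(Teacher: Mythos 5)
Your proposal is correct and follows essentially the same route as the paper: both reduce the claim to convergence of the special fundamental system $c,s$ (and hence of the monodromy trace) at $x=1$, driven by the uniform convergence $q(x;\tau)\to q_T(x)$ on the period interval. The paper states this continuous-dependence step without elaboration, while you supply the standard Gronwall justification; no substantive difference.
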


\begin{proof}
Let $c_{\tau}(x;E)$ and $s_{\tau}(x;E)$ (resp. $c_{T}(x;E)$ and $s_{T}(x;E)$) be the special fundamental
system of solutions of (\ref{eq-hill-ell}) (resp. (\ref{eq-hill-tri})) satisfying the initial values
\[
c(0;E)=s^{\prime}(0;E)=1,\ c^{\prime}(0;E)=s(0;E)=0,
\]
then we have
\[
\Delta(E;\tau)=c_{\tau}(1;E)+s_{\tau}^{\prime}(1;E),\quad \Delta_T(E)=c_{T}(1;E)+s_{T}^{\prime}(1;E).
\]
Together with $q(x;\tau)\to q_T(x)$ uniformly on compact set of $\mathbb{R}$ as $\tau\to i\infty$, we obtain (\ref{eq-4-1}).
\end{proof}

Recalling $n_0=\max_k n_k\geq n_1$, it is well known that $q_{T}^{\mathbf{n}}(z)$ in (\ref{eq-sym}) is also a solution of the stationary KdV hierarchy with its spectral polynomial $Q_T^{\mathbf{n}}(E)$ given by
\begin{align}\label{eq-4-211}Q_{T}^{\mathbf{n}}(E)=&(E-C_T^{\mathbf{n}})
\prod_{j=1}^{n_0-n_1}(E-C_T^{\mathbf{n}}+j^2\pi^2)^2\nonumber\\
&\cdot\prod_{j=n_0-n_1+1}^{n_0}(E-C_T^{\mathbf{n}}+(2j-n_0+n_1)^2\pi^2)^2,\end{align}
where we use notation $\prod_{j=n_0-n_1+1}^{n_0}\ast=1$ if $n_1=0$. See e.g. \cite[Proposition 3.6]{CL-CMP}.  Here we have

\begin{lemma}\label{lem-spe-poly}
Suppose the genus $g$ in (\ref{genus}) satisfies $g=n_0$, i.e. $\deg Q^{\mathbf{n}}(E;\tau)=\deg Q_{T}^{\mathbf{n}}(E)=2n_0+1$. Then
\begin{equation}\label{eq-4-21}\lim_{\tau\to i\infty}Q^{\mathbf{n}}(E;\tau)
=Q_{T}^{\mathbf{n}}(E).\end{equation}
\end{lemma}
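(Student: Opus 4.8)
The plan is to exploit that, under the hypothesis $g=n_0$, both $Q^{\mathbf{n}}(\cdot;\tau)$ and $Q_T^{\mathbf{n}}$ are \emph{monic} polynomials in $E$ of the \emph{same} degree $2n_0+1$; hence it suffices to prove that the coefficients of $Q^{\mathbf{n}}(E;\tau)$ converge, as $\tau\to i\infty$ along $i\mathbb{R}_{>0}$, to those of $Q_T^{\mathbf{n}}(E)$. First I would establish that the limit $\tilde{Q}(E):=\lim_{\tau\to i\infty}Q^{\mathbf{n}}(E;\tau)$ exists as a monic degree-$(2n_0+1)$ polynomial. This follows from the explicit construction of the spectral polynomial (cf.\ \cite{Tak5}): its coefficients, as well as the constants entering the minimal commuting operator, are universal polynomial expressions in the elliptic data $\wp(\tfrac{\omega_k}{2};\tau)$, $g_2(\tau)$, $g_3(\tau)$, all of which are holomorphic in $q=e^{2\pi i\tau}$ and tend to their finite trigonometric values (e.g.\ $\wp(\tfrac{\omega_1}{2};\tau)\to\tfrac{2\pi^2}{3}$ and $\wp(\tfrac{\omega_k}{2};\tau)\to-\tfrac{\pi^2}{3}$ for $k=2,3$) as $q\to0$. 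In particular the leading coefficient stays equal to $1$, so no degeneration of the degree occurs and $\tilde Q$ is well defined.

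It then remains to identify $\tilde Q$ with $Q_T^{\mathbf{n}}$. For the \emph{location} of the roots I would combine Lemma \ref{lem-tri-limit} with Hurwitz's theorem: since every root of $Q^{\mathbf{n}}(\cdot;\tau)$ is an (anti)periodic eigenvalue, i.e.\ a zero of $\Delta(\cdot;\tau)^2-4$, and $\Delta(\cdot;\tau)\to\Delta_T$ locally uniformly in $E$, every root of $\tilde Q$ must lie in $\Delta_T^{-1}(\pm 2)=\{C_T^{\mathbf{n}}-j^2\pi^2:j\in\mathbb{Z}_{\ge0}\}$ by (\ref{eq-4-3}). For the \emph{multiplicities} I would use the factorization (\ref{hyper}): since $Q^{\mathbf{n}}(\cdot;\tau)=C\,(\Delta^2-4)/D^2$ with $D^2$ contributing only even orders, the order of any point as a zero of $Q^{\mathbf{n}}(\cdot;\tau)$ has the same parity as its order as a zero of $\Delta(\cdot;\tau)^2-4$. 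Passing to the limit via Hurwitz at each candidate point, and recalling that $\Delta_T^2-4=-4\sin^2\sqrt{C_T^{\mathbf{n}}-E}$ has a \emph{simple} zero at $E=C_T^{\mathbf{n}}$ and \emph{double} zeros at $C_T^{\mathbf{n}}-j^2\pi^2$ for $j\ge1$, one deduces that $\tilde Q$ has a simple root at $C_T^{\mathbf{n}}$ and only even-order roots among the remaining points $C_T^{\mathbf{n}}-j^2\pi^2$, the even orders summing to $2n_0$. This is consistent with the shape of $Q_T^{\mathbf{n}}$ in (\ref{eq-4-211}).

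The main obstacle is that the parity argument alone pins down only the \emph{type} of $\tilde Q$, not the precise set of indices $j$ at which the double roots occur, and there are many divisors of $\Delta_T^2-4$ of the correct parity type. To close this gap I would invoke the defining property of the spectral polynomial together with the continuity of the stationary-KdV construction: by (\ref{eqeq}) the minimal commuting operator has coefficients that are universal differential polynomials in the potential, and since $q^{\mathbf{n}}(\cdot+z_0;\tau)\to q_T^{\mathbf{n}}(\cdot+z_0)$ together with all $x$-derivatives, locally uniformly on a complex neighborhood of $\mathbb{R}$ (here one uses $z_0\notin\mathbb{R}$, so no poles intervene), these coefficients converge to those of a commuting operator $P^T$ for $\tfrac{d^2}{dx^2}+q_T^{\mathbf{n}}$. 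Taking the limit of (\ref{eqeq}) yields $(P^T)^2=\tilde Q(\tfrac{d^2}{dx^2}+q_T^{\mathbf{n}})$, so $\tilde Q$ is a spectral polynomial of $q_T^{\mathbf{n}}$; since its degree equals the minimal value $2n_0+1$ (guaranteed by $g=n_0$, so the genus does not drop in the limit), uniqueness of the spectral polynomial forces $\tilde Q=Q_T^{\mathbf{n}}$, completing the proof.
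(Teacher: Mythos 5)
Your proposal is correct and, in its decisive final step, takes essentially the same route as the paper: establish that $\lim_{\tau\to i\infty}Q^{\mathbf{n}}(E;\tau)$ exists via Takemura's explicit construction, then identify the limit by passing to the limit in the commuting operator $P_{2g+1}$ and invoking the Burchnall--Chaundy relation (\ref{eqeq}) together with the fact that the degree $2n_0+1$ does not drop; your intermediate Hurwitz/parity discussion is extra material that you yourself correctly discard as insufficient. The one imprecision is the claim that the coefficients of the minimal commuting operator are ``universal differential polynomials in the potential'': they also involve the $\tau$-dependent integration constants $c_{\ell}(\tau)$ arising in the recursion (\ref{f-recursive}), but since these lie in $\mathbb{Q}[E_0(\tau),\dots,E_{2g}(\tau)]$ their convergence follows from the convergence of the spectral polynomial you already secured in your first paragraph --- which is exactly how the paper closes this point.
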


\begin{proof} In \cite{Tak1,Tak5}
Takemura already developed an algorithm of computing $\lim_{\tau\to i\infty}Q^{\mathbf{n}}(E;\tau)$ by decomposing $Q^{\mathbf{n}}(E;\tau)=\prod_{k=0}^3 P_k^{\mathbf{n}}(E;\tau)$, where $P_k^{\mathbf{n}}(E;\tau)$ is either $1$ or the characteristic polynomial of some matrix for each $k$; see particularly \cite[Appendix B]{Tak5}. In particular, Takemura's result implies that $\lim_{\tau\to i\infty}Q^{\mathbf{n}}(E;\tau)$ exists and can be computed explicitly for any given $\mathbf{n}$. Thus (\ref{eq-4-21}) can be proved by applying Takemura's algorithm.

Here we note that (\ref{eq-4-21}) can be also proved via the theory of the stationary KdV hierarchy. Since $q^{\mathbf{n}}(z;\tau)\to q_{T}^{\mathbf{n}}(z)$ as solutions of the stationary KdV hierarchy, and under our assumption their genus is the same, namely $\deg Q^{\mathbf{n}}(E;\tau)=\deg Q_{T}^{\mathbf{n}}(E)$, then the theory of the stationary KdV hierarchy (cf. \cite{GH-Book}) also implies (\ref{eq-4-21}) provided $\lim_{\tau\to i\infty}Q^{\mathbf{n}}(E;\tau)$ exists. We sketch the proof here for the reader's convenience.

First we review the basic setting on the stationary KdV hierarchy following \cite[Chapter 1]{GH-Book}. Given a meromorphic function $q(z)$, we define $\{f_{\ell}(q)\}_{\ell\in\mathbb{N}\cup\{0\}}$ recursively by
\begin{equation}\label{f-recursive}
f_0=1,\quad f_{\ell}'=-\tfrac{1}{4}f_{\ell-1}^{(3)}+qf_{\ell-1}'
+\tfrac{1}{2}q'f_{\ell-1}, \quad\ell\in\mathbb{N}.
\end{equation}
Explicitly, one finds
\[f_0=1,\quad f_1=\tfrac{1}{2}q+c_1,\]
\[f_2=-\tfrac{1}{8}(q''-3q^2)+c_1\tfrac{1}{2}q+c_2,\quad\text{etc.}\]
Here $\{c_{\ell}\}_{\ell\in\mathbb{N}}\subset\mathbb{C}$ denote integration constants that naturally arise when solving (\ref{f-recursive}). Subsequently, is will be convenient also to introduce the corresponding homogeneous coefficients $\hat{f}_{\ell}$ denoted by the vanishing of the integration constants $c_k$ for all $k$:
\[
\hat{f}_0=f_0=1,\quad \hat{f}_{\ell}=f_{\ell}\big|_{c_k=0, k=1,\cdots,\ell}\,,\quad \ell\in\mathbb{N}.
\]
Hence,
\begin{equation}\label{hat-f-2}
f_{\ell}=\sum_{k=0}^{\ell}c_{\ell-k}\hat{f}_k,\quad \ell\in\mathbb{N}\cup\{0\},\quad \text{where } c_0=1,
\end{equation}
and
\[
\hat{f}_0=1,\quad \hat{f}_1=\tfrac{1}{2}q,\quad\hat{f}_2=-\tfrac{1}{8}(q''-3q^2),\]
\begin{align*}
\hat{f}_3=\tfrac{1}{32}(q^{(4)}-10qq''-5q'^2+10q^3),\quad\text{etc.}
\end{align*}
It is known (cf. \cite[Theorem D.1]{GH-Book}) that $\hat{f}_{\ell}$ also satisfies (\ref{f-recursive}) and
\begin{equation}\label{hat-f-expression-sl}\hat{f}_{\ell}(q)\in \mathbb{Q}[q,q',q'',\cdots,q^{(2\ell-2)}],
\quad\ell\in\mathbb{N}.\end{equation}

Now consider a second-order differential operator of Schr\"{o}dinger-type
$L=\frac{d^2}{dz^2}+q(z)$
and a $2g+1$-order differential operator
\begin{equation}\label{eee}P_{2g+1}=\sum_{j=0}^g\left(f_{j}\frac{d}{dz}
-\tfrac{1}{2}f_{j}'\right)L^{g-j},\quad g\in\mathbb{N}\cup\{0\}.\end{equation}
By the recursion (\ref{f-recursive}), a direct computation leads to ($[\cdot,\cdot]$ the commutator symbol)
\[[L, P_{2g+1}]=-2f_{g+1}',\quad g\in\mathbb{N}\cup\{0\}.\]
In particular, $(L, P_{2g+1})$ represents the celebrated {\it Lax pair} of the KdV hierarchy. Varying $g\in\mathbb{N}\cup\{0\}$, the stationary KdV hierarchy
is then defined in terms of the vanishing of the commutator of $L$ and $P_{2g+1}$ by
\[
\text{s-KdV}_{g}(q):=[L, P_{2g+1}]=-2f_{g+1}'=0,\quad g\in\mathbb{N}\cup\{0\}.
\]

Now for the DTV potential $q^{\mathbf{n}}(z;\tau)$, there are integration constants $\{c_{\ell}^{\mathbf{n}}(\tau)\}_{\ell=1}^{g}$ such that the corresponding $P_{2g+1}^{\mathbf{n}}(\tau)=P_{2g+1}$ given in (\ref{hat-f-2})-(\ref{eee}) satisfies
\[[\tfrac{d^2}{dz^2}+q^{\mathbf{n}}(z;\tau), P_{2g+1}^{\mathbf{n}}(\tau)]=0.\]
On the other hand, it is known (\cite[Appendix D]{GH-Book}) that each integration constant $c_{\ell}^{\mathbf{n}}(\tau)\in \mathbb{Q}[E_0(\tau),\cdots, E_{2g}(\tau)]$, where $E_0(\tau),\cdots, E_{2g}(\tau)$ denote all the roots of the spectral polynomial $Q^{\mathbf{n}}(E;\tau)$. Since $\lim_{\tau\to i\infty}Q^{\mathbf{n}}(E;\tau)$ exists, we see that $c_{\ell}^{\mathbf{n}}(\tau)$ converges. From here, $q^{\mathbf{n}}(z;\tau)\to q_{T}^{\mathbf{n}}(z)$ and (\ref{hat-f-2})-(\ref{eee}), we conclude that
\[P_{2g+1}^{\mathbf{n}}:=\lim_{\tau\to i\infty}P_{2g+1}^{\mathbf{n}}(\tau)=\left(\tfrac{d}{dz}\right)^{2g+1}+\cdots\]
is a well-defined differential operator of order $2g+1=2n_0+1$ and
\[[\tfrac{d^2}{dz^2}+q_T^{\mathbf{n}}(z), P_{2g+1}^{\mathbf{n}}]=0.\]
Then, as recalled in (\ref{eqeq}), we obtain the following relations
\[P_{2g+1}^{\mathbf{n}}(\tau)^2=Q^{\mathbf{n}}(\tfrac{d^{2}}{dz^{2}}
+q^{\mathbf{n}}(z;\tau);\tau),\]
\[(P_{2g+1}^{\mathbf{n}})^2=Q_T^{\mathbf{n}}(\tfrac{d^{2}}{dz^{2}}
+q_T^{\mathbf{n}}(z)),\]
and so (\ref{eq-4-21}) holds.
\end{proof}

\begin{remark}\label{rmk1}
Given $\mathbf{n}=(n_0,n_1,n_2,n_3)$ with $n_k\in\mathbb{Z}_{\geq 0}$ and $n_0=\max n_k\geq 1$, we assume that $\sum n_k$ is odd and define $\tilde{\mathbf{n}}=(l_0,l_1,l_2,l_3)$ by
{\allowdisplaybreaks
\begin{align*}
& l_{0}= (n_{0}+n_{1}+n_{2}+n_{3}+1)/2\\
& l_{1}=\max\{\tilde{l}_1, -\tilde{l}_1-1\},\quad \tilde{l}_1:= (n_{0}+n_{1}-n_{2}-n_{3}-1)/2\\
& l_{2}=\max\{\tilde{l}_2, -\tilde{l}_2-1\},\quad \tilde{l}_2:= (n_{0}-n_{1}+n_{2}-n_{3}-1)/2\\
& l_{3}=\max\{\tilde{l}_3, -\tilde{l}_3-1\},\quad \tilde{l}_3:= (n_{0}-n_{1}-n_{2}+n_{3}-1)/2.
\end{align*}
}%
Then it was proved by Takemura \cite[Section 4]{Tak5} that
$y''(z)=[-q^{\mathbf{n}}(z;\tau)+E]y(z)$
and $y''(z)=[-q^{\tilde{\mathbf{n}}}(z;\tau)+E]y(z)$ are isomonodromic (i.e. their monodromy representations are the same) for any $(E,\tau)$, which immediately implies $Q^{\mathbf{n}}(E;\tau)=Q^{\tilde{\mathbf{n}}}(E;\tau)$. Here together with Lemma \ref{lem-spect-in}, we see that the spectrum $\sigma(\tilde{L})$ of
$\tilde{L}=\frac{d^2}{dx^2}+q^{\tilde{\mathbf{n}}}(x+z_0;\tau)$ is the same as $\sigma(L)$ of $L=\frac{d^2}{dx^2}+q^{\mathbf{n}}(x+z_0;\tau)$.
\end{remark}

\begin{remark} From the physical motivation, Takemura \cite{Tak2} studied the holomorphic dependence of certain $L^2$-integrable eigenvalues on $p=e^{\pi i \tau}$ as power series of $p$ as $\tau\to i\infty$; see \cite{Tak2} precise statements. In this paper, though we do not need to use the holomorphic dependence of (anti)periodic eigenvalues on $p=e^{\pi i \tau}$, but some idea of \cite{Tak2} was developed further in \cite{Tak5} and plays an important role in our proof of Lemma \ref{lem-spe-poly} and so in Theorem \ref{thm-TV-open-gap}.
\end{remark}

Now we are in the position to prove Theorem \ref{thm-TV-open-gap}.

\begin{proof}[Proof of Theorem \ref{thm-TV-open-gap}] Let $\mathbf{n}$ satisfy neither (\ref{c1}) nor (\ref{c2}), namely one of Cases (a)-(c) holds. Since $\tau=ib$ with $b>0$, it follows from Theorem A and (\ref{spe-S}) that
\[Q^{\mathbf{n}}(E;\tau)=\prod_{j=0}^{2g}(E-E_{j}(\tau))\]
with $E_{2g}(\tau)<\cdots<E_{0}(\tau)$, and the spectrum $\sigma(L_{\tau}):=\sigma(L)$ of $L_{\tau}:=L=\frac{d^2}{dx^2}+q(x;\tau)$ is given by
\begin{align}\label{eq-4-7}\sigma(L_{\tau})&=\{E\in\mathbb{C}\,|\, -2\leq \Delta(E;\tau)\leq 2\}\nonumber\\
&=(-\infty, E_{2g}(\tau)]\cup[E_{2g-1}(\tau), E_{2g-2}(\tau)]\cup \cdots \cup[E_{1}(\tau), E_{0}(\tau)].\end{align}
Since $E$ is a (anti)periodic eigenvalue of $L(\tau)$ if and only if $\Delta(E;\tau)=\pm 2$, so
\begin{equation}\label{eq-4-2}
\Delta(E_{j}(\tau);\tau)=\pm 2,\quad \forall j,
\end{equation}
\[\sigma_{p}(L_{\tau})=\{E\in\mathbb{C}\,|\,\Delta(E;\tau)=\pm 2\}\setminus\{E_{j}(\tau), j\in [0,2g]\}.\]

Recalling that $\Delta(E;\tau)$ is holomorphic in $E$, so for any $1\leq j\leq g$, if $\tilde{E}\in (E_{2j-1}(\tau),E_{2j-2}(\tau))$ is a local minimum point (resp. a local maximum point) of $\Delta(\cdot;\tau)$ on $(E_{2j-1}(\tau),E_{2j-2}(\tau))$, then
\begin{equation}\label{eq-4-5}
\Delta(\tilde{E};\tau)=-2\quad(\text{resp.}\quad \Delta(\tilde{E};\tau)=2).
\end{equation}
Indeed, if $\tilde{E}$ is a local minimum point of $\Delta(\cdot;\tau)$ on $(E_{2j-1}(\tau),E_{2j-2}(\tau))$ and $\Delta(\tilde{E};\tau)\in (-2, 2)$, then $\frac{d}{dE}\Delta(\tilde{E};\tau)=0$ and so it follows from
\begin{equation}\label{eq-Taylor}\Delta(E;\tau)-\Delta(\tilde{E};\tau)=a(E-\tilde{E})^k+o((E-\tilde{E})^k),\; a\neq 0, \; k\geq 2\end{equation}
and $\sigma(L_{\tau})=\{E\in\mathbb{C}\,|\, -2\leq \Delta(E;\tau)\leq 2\}$ that there are $2k\geq 4$ semi-arcs of $\sigma(L_{\tau})$ meeting at $\tilde{E}$, a contradiction with (\ref{eq-4-7}).

{\bf Step 1.} We consider Case (a).

Then it follows from (\ref{genus1})-(\ref{genus2}) that $g=n_0$ and $m=n_0-n_1$.
Therefore, Lemma \ref{lem-spe-poly} applies and we conclude from (\ref{eq-4-211})-(\ref{eq-4-21}) that
\begin{equation}\label{lim1}\lim_{\tau\to i\infty}E_0(\tau)=C_T^{\mathbf{n}},\end{equation}
\[\lim_{\tau\to i\infty}E_{2j-1}(\tau)=\lim_{\tau\to i\infty}E_{2j}(\tau)=C_T^{\mathbf{n}}-j^2\pi^2,\quad 1\leq j\leq m,\]
\begin{equation}\label{lim2}\lim_{\tau\to i\infty}E_{2j-1}(\tau)=\lim_{\tau\to i\infty}E_{2j}(\tau)=C_T^{\mathbf{n}}-(2j-m)^2\pi^2,\; m< j\leq g.\end{equation}

{\bf Case 1.} $1\leq j\leq m$.
Note that if $m=0$, then this case does not happen. So we assume $m\geq 1$.

Since \begin{equation}\label{eq-4-6}[E_{2j-1}(\tau),E_{2j-2}(\tau)]\to [C_T^{\mathbf{n}}-j^2\pi^2, C_T^{\mathbf{n}}-(j-1)^2\pi^2]\end{equation} as $\tau\to i\infty$, we conclude from (\ref{eq-4-1}) and (\ref{eq-4-2}) that
\[\Delta(E_{2j-1}(\tau);\tau)=(-1)^j2,\quad \Delta(E_{2j-2}(\tau);\tau)=(-1)^{j-1}2,\]
hold for all $\tau\in i\mathbb{R}_{>0}$ via the continuity of $\Delta(E,\tau)$ with respect to $(E,\tau)$.

Now we claim that for any $\tau\in i\mathbb{R}_{>0}$,
\begin{equation}\label{eq-4-4}
\sigma_{p}(L_{\tau})\cap(E_{2j-1}(\tau), E_{2j-2}(\tau))=\emptyset,
\end{equation}
namely
\[\Delta((E_{2j-1}(\tau), E_{2j-2}(\tau));\tau)=(-2,2).\]

Without loss of generality, we may assume that $j$ is odd (the case that $j$ is even can be proved in the same way).
First we show that (\ref{eq-4-4}) holds for $b=\operatorname{Im}\tau$ large. If not, there exists $\tau_k=ib_k$ with $b_k\to +\infty$ such that
\[\sigma_{p}(L_{\tau_k})\cap(E_{2j-1}(\tau_k), E_{2j-2}(\tau_k))\neq\emptyset.\]
This together with (\ref{eq-4-5}) imply the existence of $E_{1,k}, E_{2,k}\in \sigma_{p}(L_{\tau_k})$ satisfying
\[E_{2j-1}(\tau_k)<E_{1,k}<E_{2,k}< E_{2j-2}(\tau_k),\]
\[\Delta(E_{2j-1}(\tau_k);\tau_k)=\Delta(E_{2,k};\tau_k)=-2,\]
\[\Delta(E_{2j-2}(\tau_k);\tau_k)=\Delta(E_{1,k};\tau_k)=2.\]
By (\ref{eq-4-3}), (\ref{eq-4-1}) and (\ref{eq-4-6}), we obtain
\begin{equation}\label{eq-4-9}C_T^{\mathbf{n}}-(j-1)^2\pi^2=\lim_{k\to\infty}E_{1,k}\leq \lim_{k\to\infty}E_{2,k}=C_T^{\mathbf{n}}-j^2\pi^2,\end{equation}
clearly a contradiction.

Therefore, (\ref{eq-4-4}) holds for $b$ large. Define
\[\tilde{b}:=\inf\{b_0>0\,|\,\text{(\ref{eq-4-4}) holds for all $b>b_0$}\}\]
and suppose $\tilde{b}>0$. Then (\ref{eq-4-4}) holds for all $b>\tilde{b}$. If (\ref{eq-4-4}) holds for $b=\tilde{b}$, then the definition of $\tilde{b}$ implies the existence of $b_k\uparrow \tilde{b}$ such that (\ref{eq-4-4}) does not holds for $\tau_k=ib_k$, so the same argument as (\ref{eq-4-9}) shows $E_{2j-2}(i\tilde{b})\le E_{2j-1}(i \tilde{b})$, a contradiction. Hence (\ref{eq-4-4}) does not hold
for $\tilde{\tau}=i\tilde{b}$. Again this implies the existence of $\tilde{E}_{1}, \tilde{E}_{2}$ satisfying
\[E_{2j-1}(\tilde{\tau})<\tilde{E}_{1}<\tilde{E}_{2}< E_{2j-2}(\tilde{\tau}),\]
\[\Delta(E_{2j-1}(\tilde{\tau});\tilde{\tau})=\Delta(\tilde{E}_{2};\tilde{\tau})=-2
<\Delta(\tilde{E}_{1};\tilde{\tau})=2.\]
Then for $\tau=ib$ with $b-\tilde{b}>0$ sufficiently small, $\Delta(\cdot;\tau)$ has a local maximum point $E_{\tau}\in (E_{2j-1}(\tau), \tilde{E}_{2})$. However, (\ref{eq-4-5}) implies  $E_{\tau}\in \sigma_{p}(L_{\tau})\cap (E_{2j-1}(\tau), E_{2j-2}(\tau))$, a contradiction with the definition of $\tilde{b}$.

Therefore, $\tilde{b}=0$ and so (\ref{eq-4-4}) holds for all $\tau\in i\mathbb{R}_{>0}$.

{\bf Case 2.}  $m+1\leq j\leq g$.
Since \begin{align*}&[E_{2j-1}(\tau),E_{2j-2}(\tau)]\to \\ &[C_T^{\mathbf{n}}-(2j-m)^2\pi^2, C_T^{\mathbf{n}}-(2j-2-m)^2\pi^2]\end{align*} as $\tau\to i\infty$, we conclude from (\ref{eq-4-1}) and (\ref{eq-4-2}) that
\[\Delta(E_{2j-1}(\tau);\tau)=\Delta(E_{2j-2}(\tau);\tau)=(-1)^{m}2\]
hold for all $\tau\in i\mathbb{R}_{>0}$. Then by (\ref{eq-4-5}), there is a smallest \[\tilde{E}(\tau)\in \sigma_{p}(L_{\tau})\cap(E_{2j-1}(\tau),E_{2j-2}(\tau))\] satisfying
$\Delta(\tilde{E}(\tau);\tau)=(-1)^{m+1}2$.
From here and (\ref{eq-4-3})-(\ref{eq-4-1}), we obtain
\[\lim_{\tau\to i\infty}\tilde{E}(\tau)=C_T^{\mathbf{n}}-(2j-1-m)^2\pi^2.\]
Then the same argument as Case 1 shows that
\[\sigma_{p}(L_{\tau})\cap(E_{2j-1}(\tau),\tilde{E}(\tau))=\emptyset,\]
\[\sigma_{p}(L_{\tau})\cap(\tilde{E}(\tau),E_{2j-2}(\tau))=\emptyset.\]
In conclusion,
\[\sigma_{p}(L_{\tau})\cap(E_{2j-1}(\tau),E_{2j-2}(\tau))=\{\tilde{E}(\tau)\}.\]
This completes the proof for Case (a).

{\bf Step 2.} We consider Case (b): $n_0+n_3=n_1+n_2-1$.

Then (\ref{genus1})-(\ref{genus2}) says $g=n_0+n_3+1>n_0$ and $m=n_2+n_3+1$, so Lemma \ref{lem-spe-poly} does not apply. However, by Remark \ref{rmk1} we have $Q^{\mathbf{n}}(E;\tau)=Q^{\tilde{\mathbf{n}}}(E;\tau)$, where $\tilde{\mathbf{n}}=(l_0,l_1,l_2,l_3)$ with
\[l_{0}= (n_{0}+n_{1}+n_{2}+n_{3}+1)/2=n_0+n_3+1=g,\]
\[l_{1}=\tilde{l}_1= (n_{0}+n_{1}-n_{2}-n_{3}-1)/2=n_0-n_2,\]
\[l_{2}=\tilde{l}_2= (n_{0}-n_{1}+n_{2}-n_{3}-1)/2=n_0-n_1,\]
\[l_{3}= -\tilde{l}_3-1=-1-(n_{0}-n_{1}-n_{2}+n_{3}-1)/2=0.\]
Clearly (\ref{genus}) says that $\deg Q^{\tilde{\mathbf{n}}}(E;\tau)=2g+1=2l_0+1=\deg Q_{T}^{\tilde{\mathbf{n}}}(E)$ and $m=l_0-l_1$, so Lemma \ref{lem-spe-poly} implies
\begin{equation}\label{eq-lll}\lim_{\tau\to i\infty}Q^{\mathbf{n}}(E;\tau)=\lim_{\tau\to i\infty}Q^{\tilde{\mathbf{n}}}(E;\tau)
=Q_{T}^{\tilde{\mathbf{n}}}(E),\end{equation}
namely (\ref{lim1})-(\ref{lim2}) with $C^{\mathbf{n}}_T$ replaced by $C^{\tilde{\mathbf{n}}}_T$ hold. Then the same proof as Step 1 yields the desired assertions.

Here we emphasize that $Q_{T}^{\mathbf{n}}(E)\neq Q_{T}^{\tilde{\mathbf{n}}}(E)$ and $\lim_{\tau\to i\infty}Q^{\mathbf{n}}(E;\tau)\neq Q_{T}^{\mathbf{n}}(E)$ because their degrees are not the same.

{\bf Step 3.} We consider Case (c): $n_0+n_3=n_1+n_2+1$ and $n_3\geq 1$.

Then (\ref{genus1})-(\ref{genus2}) says $g=n_0+n_3>n_0$ and
 \[m=%
\begin{cases}
n_2+n_3+1 & \text{if $n_0>n_2$},\\
n_2+n_3   &\text{if $n_0=n_2$},
\end{cases}\]so Lemma \ref{lem-spe-poly} does not apply. Again by Remark \ref{rmk1} we have $Q^{\mathbf{n}}(E;\tau)=Q^{\tilde{\mathbf{n}}}(E;\tau)$, where $\tilde{\mathbf{n}}=(l_0,l_1,l_2,l_3)$ with
\[l_{0}= (n_{0}+n_{1}+n_{2}+n_{3}+1)/2=n_0+n_3=g,\]
\[\tilde{l}_1= (n_{0}+n_{1}-n_{2}-n_{3}-1)/2=n_0-n_2-1,\]
\[\text{i.e.}\quad l_1=\max\{\tilde{l}_1,-1-\tilde{l}_1\}=\begin{cases}
n_0-n_2-1 & \text{if $n_0>n_2$},\\
0  &\text{if $n_0=n_2$},
\end{cases}\]
\[\tilde{l}_2= (n_{0}-n_{1}+n_{2}-n_{3}-1)/2=n_0-n_1-1,\]
\[\text{i.e.}\quad l_2=\max\{\tilde{l}_2,-1-\tilde{l}_2\}=\begin{cases}
n_0-n_1-1 & \text{if $n_0>n_1$},\\
0  &\text{if $n_0=n_1$},
\end{cases}\]
and
\[l_{3}= \tilde{l}_3=(n_{0}-n_{1}-n_{2}+n_{3}-1)/2=0.\]
Again (\ref{genus}) says that $\deg Q^{\tilde{\mathbf{n}}}(E;\tau)=2g+1=2l_0+1=\deg Q_{T}^{\tilde{\mathbf{n}}}(E)$ and $m=l_0-l_1$, so Lemma \ref{lem-spe-poly} implies
(\ref{eq-lll}) and hence (\ref{lim1})-(\ref{lim2}) with $C^{\mathbf{n}}_T$ replaced by $C^{\tilde{\mathbf{n}}}_T$ hold. The rest proof is the same as Step 1.

The proof is complete.
\end{proof}

\medskip

\noindent{\bf Acknowledgements} The authors thank Professor Veselov for pointing out that the DTV potential was first introduced by Darboux \cite{Darboux}. The research of Z. Chen was supported by NSFC (No. 11871123) and Tsinghua University Initiative Scientific Research Program (No. 2019Z07L02016).

\end{document}